\newtheorem{thm}{Theorem}[section]
\newtheorem{prop}[thm]{Proposition}
\newtheorem{wn}[thm]{Corollary}
\newtheorem{obs}[thm]{Observation}
\newtheorem{rem}{Remark}
\newcommand{\sd}{{\rm sd}}
\title{Critical graphs upon multiple edge subdivision}
\author[Magda Dettlaff,\\ Magdalena Lema\'nska, Adriana Roux]{Magda Dettlaff,\\ Magdalena Lema\'nska, Adriana Roux}
\date{}
\begin{document}

\maketitle

 \begin{abstract}
A subset $D$ of $V$ is \emph{dominating} in $G$ if every vertex of $V-D$ has at least one neighbour in $D;$ let $\gamma(G)$ be the minimum cardinality among all dominating sets in $G.$
A graph $G$ is $\gamma$-$q$-{\it  critical} if the smallest subset
of edges whose subdivision necessarily increases 
$\gamma(G)$ has cardinality $q.$ 
 In this paper we consider mainly $\gamma$-$q$-critical trees and give some general properties of $\gamma$-$q$-critical graphs. In particular, we show that if $T$ is a $\gamma$-$q$-critical tree, then $1 \leq q \leq n(T)-1$ and we characterize extremal trees
when $q=n(T)-1.$ Since a subdivision number {of a tree $T$} $\sd(T)$ is always $1,2$ or $3,$ we also characterize  $\gamma$-$2$-critical trees $T$ with $\sd(T)=2$ and $\gamma$-$3$-critical trees $T$ with $\sd(T)=3.$ 

{\AmS \; Subject Classification:} 05C05, 05C69.

Keywords: domination number, edge subdivision, critical graphs.
\end{abstract}

\section{Introduction}

Let $G =(V ,E)$ be a connected graph of order $n(G)$ and size $m(G)$.

The \emph{neighbourhood} $N_{G}(v)$ of a vertex $v \in V$ is the set of all vertices adjacent to $v$ in $G.$ The \emph{degree} of a vertex $v$ is denoted by $d_{G}(v)=|N_{G}(v)|.$

For a set $X\subseteq V,$ the \emph{open neighbourhood} $N_{G}(X)$ is denoted by $\bigcup_{v\in X}N_{G}(v)$ and the \emph{closed neighbourhood} is denoted by $N_{G}[X]=N_{G}(X)\cup X.$ For a set $S$, let $N_S[x]=N_G[x]\cap S$.

A vertex $v$ is an \emph{end-vertex} {(or a \emph{leaf})} of $G$ if $v$ has exactly one neighbour in $G.$ The set of all end-vertices in $G$ is denoted by $\Omega(G).$

A vertex $v$ is called a \emph{support} if it is adjacent to an end-vertex. If $v$ is adjacent to only one end-vertex, it is called a \emph{weak support}. Otherwise, $v$ is called a \emph{strong support}. The set of all supports in a graph is denoted by $S(G).$

{The {\it distance} between two vertices $u,v$ is the length of the shortest $u-v$ path  in a graph $G$ and is denoted by $d_G(u,v)$. A $u-v$ path of  length $d_G(u,v)$ is called a $u-v$ {\it geodesic}. We say that a set $A\subseteq V$ is a 2-{\it packing} if $d_G(x,y)>2$ for every $x,y\in A$. }

For a graph $G$, the {\it subdivision} of an edge $e=uv$ with a new vertex $w$ (called the \emph{subdivision vertex}) is an operation which leads to a graph~$G_e$ with $V(G_e)=V(G)\cup \{w\}$ and $E(G_e)=(E(G)\setminus\{uv\})\cup \{uw,wv\}$.
Furthermore, the graph obtained from $G$ by subdividing all the edges in the set $F\subseteq E(G)$ is denoted by $G_F$.

 A subset $D$ of $V$ is \emph{dominating} in $G$ if every vertex of $V\setminus D$ has at least one neighbour in $D.$ 
Let $\gamma(G)$ be a minimum cardinality among all dominating sets in $G$ and a dominating set of cardinality $\gamma(G)$ is called a $\gamma$-{\it set} of $G$.  For domination related concepts not defined here, consult \cite{fund}.

The \emph{domination subdivision number}, sd$(G)$, of a graph $G$ is a minimum number of edges which must be subdivided (where each edge can be subdivided at most once) in order to increase the domination number. Since the domination number of the graph $K_2$ does not increase when its only edge is subdivided, we {therefore consider only connected graphs} of order at least $3$. The domination subdivision number was defined by Velammal in 1997 (see \cite{vel}) and since then it is widely studied in graph theory papers.  This parameter was studied for trees in~\cite{fav1} and  ~\cite{myn}. General bounds and properties has been studied for, among others, in~\cite{HHH},  ~\cite{bhv},  ~\cite{fav3}, and  ~\cite{fav2}. 
A graph $G$ is $\gamma$-$q$-{\it  critical} {(shortly $q$-critical)} if the smallest subset
of edges whose subdivision necessarily increases 
$\gamma(G)$ has cardinality $q;$ $q \geq 1.$ {In other words, $G$ is $\gamma$-$q$-critical if $\gamma(G_F)>\gamma(G)$ for every $F\subseteq E$ such that $|F|=q$. From the definition we immediately obtain the following remark.
\begin{rem} Graph $G$ is $q$-critical if subdivision of any $q$ edges changes the domination number of $G$ and there is a set of $q-1$ edges such that a subdivision of these edges does not change the domination number $G$.
\end{rem}}

In this paper we consider {mainly} $\gamma$-$q$-critical trees. In particular, we show that if $T$ is a $\gamma$-$q$-critical tree, then $1 \leq q \leq n(T)-1$ and we characterize extremal trees. The case where $q=1$ was introduced by Rad \cite{rad13} in 2013 where he characterized 1-critical graphs as graphs where every $\gamma$-set is a 2-packing. He also provided a structural characterization of 1-critical trees. We characterize trees for which $q=n(T)-1.$ Since a subdivision number $\sd(T)$ is always $1,2$ or $3,$ we also characterize $\gamma$-$2$-critical trees $T$ with $\sd(T)=2$ and $\gamma$-$3$-critical trees $T$ with $\sd(T)=3.$

\section{Preliminary results}

Note that the domination number of a graph cannot decrease with the subdivision of an edge and can increase with at most one.

\begin{prop}\cite{rad13}\label{prop:increase}
For any edge $e$ in a graph $G$, $\gamma(G)\leq \gamma(G_e)\leq \gamma(G)+1$.
\end{prop}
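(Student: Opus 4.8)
The plan is to prove the two inequalities separately, in each case transforming an optimal dominating set of one graph into a dominating set of the other. Throughout, write $e = uv$ and let $w$ denote the subdivision vertex, so that $V(G_e) = V \cup \{w\}$ and the only adjacency change is that the edge $uv$ is removed and the two edges $uw$ and $wv$ are added; in particular $N_{G_e}[x] = N_G[x]$ for every $x \in V \setminus \{u,v\}$.

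For the upper bound $\gamma(G_e) \le \gamma(G) + 1$, I would start from a $\gamma$-set $D$ of $G$ and simply set $D' = D \cup \{w\}$. Adding $w$ handles $u$, $v$ and $w$ themselves, since $w$ is adjacent to both $u$ and $v$ in $G_e$, while every remaining vertex of $V$ keeps its $G$-neighbourhood in $G_e$ and is therefore still dominated by $D$. This produces a dominating set of $G_e$ of size $|D|+1$, giving the bound.

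For the lower bound $\gamma(G) \le \gamma(G_e)$, I would start from a $\gamma$-set $D'$ of $G_e$ and produce a dominating set $D$ of $G$ with $|D| \le |D'|$, splitting into two cases. If $w \notin D'$, then $D' \subseteq V$, and one checks directly that $D'$ still dominates $G$ (each of $u$ and $v$ is dominated in $G_e$ by a vertex other than $w$, hence by a vertex that still dominates it in $G$). If $w \in D'$, I would replace $w$ by one of its neighbours, say $D = (D' \setminus \{w\}) \cup \{u\}$; since $w$ dominated only $u$, $v$ and itself in $G_e$, and $u$ dominates both $u$ and $v$ in $G$ once the edge $uv$ is restored, the set $D$ dominates $G$ with $|D| \le |D'|$.

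The one point requiring care — and the main, though mild, obstacle — is verifying in the lower-bound argument that no domination in $G_e$ relies on the deleted edge $uv$. Because $u$ and $v$ are non-adjacent in $G_e$, any vertex of $V$ that dominates another vertex of $V$ in $G_e$ must do so through an edge that also exists in $G$, so restoring $uv$ loses no coverage; this observation is exactly what makes the case $w \in D'$ go through cleanly, and together the two cases yield $\gamma(G) \le \gamma(G_e)$.
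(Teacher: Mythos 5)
Your proof is correct and complete: both directions are handled by the standard set-transformation argument, and your key observation that every edge of $G_e$ between two vertices of $V$ is also an edge of $G$ (so domination in $G_e$ among old vertices never relies on the deleted edge $uv$) is exactly what is needed in the lower-bound cases. Note that the paper itself gives no proof of this proposition --- it is quoted from Rad \cite{rad13} --- but your argument is the standard one and matches what that reference does, so there is nothing to flag.
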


We begin with some general remarks.

\begin{obs} If there is a $\gamma$-set $D$ in $G$ such that in $V \setminus D$ there exists a vertex having $k$ neighbours in $D,$ then $G$ is not $(k-\ell)$-critical; $1 \leq \ell \leq k-1.$
\end{obs}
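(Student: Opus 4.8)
The plan is to show, for each integer $j$ with $1 \le j \le k-1$ (writing $j = k-\ell$, so that $\ell$ running over $\{1,\dots,k-1\}$ makes $j$ run over the same set), that there is a set $F$ of exactly $j$ edges whose subdivision leaves the domination number unchanged. By the definition of $q$-criticality recalled in the Remark, this suffices: $q$-criticality demands that \emph{every} $q$-element edge set raise $\gamma$, so exhibiting a single $j$-element set that does not raise $\gamma$ already certifies that $G$ is not $j$-critical.

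Let $D$ be the given $\gamma$-set and let $v \in V \setminus D$ have neighbours $u_1, \dots, u_k \in D$. For a fixed $j \le k-1$ I would take $F = \{vu_1, \dots, vu_j\}$; these are $j$ distinct edges since $v$ has $k$ distinct neighbours in $D$. Form $G_F$, introducing subdivision vertices $w_1, \dots, w_j$, where $w_i$ is adjacent to $v$ and to $u_i$. The claim is that $D$ remains dominating in $G_F$. First, each new vertex $w_i$ has the neighbour $u_i \in D$ and is thus dominated. Second---and this is the only place the hypothesis $j \le k-1$ enters---the vertex $v$ keeps at least one neighbour in $D$, namely any of $u_{j+1}, \dots, u_k$, and so is still dominated. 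Finally, every other vertex retains all of its original neighbours in $D$: the only edges altered are incident with $v$, and the affected endpoints $u_1, \dots, u_j$ belong to $D$, so they continue to dominate themselves and are untouched as dominators elsewhere.

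Hence $D$ dominates $G_F$, giving $\gamma(G_F) \le |D| = \gamma(G)$; together with $\gamma(G_F) \ge \gamma(G)$ from Proposition~\ref{prop:increase} this yields $\gamma(G_F) = \gamma(G)$. Since subdividing the $j$-element set $F$ does not increase $\gamma$, the graph $G$ fails the defining condition for $j$-criticality, i.e. $G$ is not $(k-\ell)$-critical for the corresponding $\ell$. Letting $j = k-\ell$ range over $\{1,\dots,k-1\}$ covers all $\ell$ with $1 \le \ell \le k-1$. The one point that must be checked with care is that $D$ stays dominating after subdivision; the genuine pitfall is the domination of $v$, which is secured precisely by leaving at least one of its $D$-neighbours unsubdivided---that is, exactly by the restriction $\ell \ge 1$.
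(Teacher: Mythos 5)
Your proof is correct: subdividing $j\le k-1$ of the edges $vu_1,\dots,vu_k$ leaves $D$ dominating (each subdivision vertex is adjacent to some $u_i\in D$, and $v$ retains an unsubdivided $D$-neighbour), so $\gamma$ does not increase and $G$ cannot be $j$-critical. The paper states this observation without proof, and your argument is precisely the evident intended one, so there is nothing to compare beyond noting the match.
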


\begin{wn} If $G$ is $q$-critical, then for any $\gamma$-set $D$ of $G$ and every $v \in V \textcolor{red}{\setminus}D$ we have $|N_D(v)|\leq q.$
\end{wn}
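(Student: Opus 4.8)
The plan is to argue by contradiction using the preceding Observation, since the asserted inequality is precisely the statement that, when $G$ is $q$-critical, no vertex outside a $\gamma$-set can have more than $q$ neighbours inside it. So I fix a $\gamma$-set $D$, a vertex $v \in V \setminus D$, set $k = |N_D(v)|$, and suppose toward a contradiction that $k > q$.

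The Observation tells us that $G$ is not $(k-\ell)$-critical for every $\ell$ with $1 \le \ell \le k-1$. The only step needing care is choosing $\ell$ so as to land on the value $q$: I take $\ell = k - q$. Because $k > q$ we have $\ell \ge 1$, and because $q \ge 1$ (part of the definition of $q$-critical) we have $\ell = k - q \le k - 1$, so $\ell$ lies in the admissible range. The Observation then gives that $G$ is not $(k-\ell) = q$-critical, contradicting the hypothesis; hence $k \le q$, which is the claim.

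For transparency I would also recall the construction behind the Observation, since that is where the content lies. Writing $N_D(v) = \{d_1,\dots,d_k\}$ with $k \ge q+1$, I subdivide exactly the $q$ edges $vd_1,\dots,vd_q$ with new vertices $w_1,\dots,w_q$, obtaining $G_F$ with $|F| = q$. Then $D$ still dominates $G_F$: each $w_i$ is adjacent to $d_i \in D$; every vertex other than $v$ and the $w_i$ keeps its original $D$-neighbour, as the only edges altered are incident to $v$; and $v$ retains the neighbour $d_{q+1} \in D$ on an unsubdivided edge. Hence $\gamma(G_F) \le |D| = \gamma(G)$, and Proposition~\ref{prop:increase} forces $\gamma(G_F) = \gamma(G)$, so these $q$ subdivisions do not raise $\gamma$ and $G$ is not $q$-critical. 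The single delicate point---both here and in the Observation---is that $v$ must keep at least one neighbour in $D$, which is why we may subdivide at most $k-1$ of its incident $D$-edges; this is exactly the role played by $\ell \ge 1$ and by the strict inequality $k > q$. Everything else is the routine check that $D$ survives as a dominating set.
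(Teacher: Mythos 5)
Your proposal is correct and matches the paper's route: the paper states this as an immediate corollary of the preceding Observation with no written proof, and your instantiation $\ell = k-q$ (valid since $k>q\geq 1$) is exactly that derivation. Your supplementary reconstruction of the Observation itself---subdividing $q$ of the edges from $v$ to $D$ while keeping $vd_{q+1}$ intact and invoking Proposition~\ref{prop:increase}---is also sound and fills in the routine argument the paper leaves implicit.
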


Since the subdivision of any $k$ edges in the cycle $C_n$ (the path $P_n$) {leads to a graph} isomorphic to $C_{n+k}$ ($P_{n+k}$), we obtain the following observation.

\begin{obs}\label{opath}
If a cycle $C_n$ and a path $P_n$, $n\geq 3$, is $q$-critical, then
\[q=\sd(C_n)=\sd(P_n)=\begin{cases} 1 &\textrm{ if } n\equiv 0 \bmod 3,\\
2 &\textrm{ if } n\equiv 2 \bmod 3,\\
3 &\textrm{ if } n\equiv 1 \bmod 3.
\end{cases}\]
\end{obs}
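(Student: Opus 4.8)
The plan is to reduce the whole statement to a one-line computation with the ceiling function. The starting point is the standard fact that $\gamma(P_n)=\gamma(C_n)=\lceil n/3\rceil$ for every $n\geq 1$, combined with the isomorphism recorded immediately before the statement: subdividing any $k$-element set $F$ of edges of $C_n$ (respectively $P_n$) yields a graph isomorphic to $C_{n+k}$ (respectively $P_{n+k}$). The decisive consequence is that the domination number of the subdivided graph depends only on the \emph{number} $|F|$ of subdivided edges, and not on which edges are chosen. Because of this edge-independence, both clauses in the definition of $q$-criticality collapse to statements about a single integer: ``subdividing every $q$-element set increases $\gamma$'' becomes the inequality $\gamma(P_{n+q})>\gamma(P_n)$, and ``some $(q-1)$-element set leaves $\gamma$ unchanged'' becomes $\gamma(P_{n+q-1})=\gamma(P_n)$.

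First I would record that, in view of the previous paragraph, $C_n$ (respectively $P_n$) is $q$-critical if and only if $q$ is the least positive integer with $\lceil(n+q)/3\rceil>\lceil n/3\rceil$. This simultaneously identifies $q$ with $\sd(C_n)=\sd(P_n)$, since the subdivision number is by definition exactly this least integer, and the subdivided graph is independent of the choice of edges.

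It then remains to evaluate this least $q$ in each residue class. Writing $n=3m+r$ with $r\in\{0,1,2\}$, I would treat the three cases separately. When $r=0$ we have $\lceil n/3\rceil=m$ and $\lceil(n+1)/3\rceil=m+1$, giving $q=1$. When $r=2$ we have $\lceil n/3\rceil=m+1=\lceil(n+1)/3\rceil$ but $\lceil(n+2)/3\rceil=m+2$, giving $q=2$. When $r=1$ we have $\lceil n/3\rceil=m+1=\lceil(n+1)/3\rceil=\lceil(n+2)/3\rceil$ but $\lceil(n+3)/3\rceil=m+2$, giving $q=3$. Matching each residue to its value reproduces the stated piecewise formula.

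There is no genuine obstacle here: once the isomorphism fact is invoked the content is entirely elementary. The only point that warrants care is the logical translation carried out in the first paragraph, namely using edge-independence to justify that the ``for every $q$-set'' clause and the ``there exists a $(q-1)$-set'' clause both reduce to statements about the single integers $n+q$ and $n+q-1$, so that $q$-criticality coincides with the subdivision number. Everything after that is bookkeeping with $\lceil\,\cdot\,\rceil$.
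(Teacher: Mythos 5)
Your proposal is correct and follows essentially the same route as the paper: the paper derives the observation precisely from the remark that subdividing any $k$ edges of $C_n$ (or $P_n$) yields $C_{n+k}$ (or $P_{n+k}$), together with the standard formula $\gamma(P_n)=\gamma(C_n)=\lceil n/3\rceil$; the paper leaves the ceiling-function case analysis implicit, which you have simply written out in full. Your explicit remark that edge-independence collapses both the ``for every $q$-set'' and ``exists a $(q-1)$-set'' clauses to statements about the single integers $n+q$ and $n+q-1$ is exactly the (unstated) justification the paper relies on.
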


\begin{obs} \cite{rad13}
If $G$ contains a universal vertex, then $G$ is 1-critical.
\end{obs}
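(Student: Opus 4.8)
The plan is to exploit the fact that a universal vertex gives domination number exactly $1$ and then argue that this value cannot survive any single subdivision. If $v$ is universal in $G$, then $N_G[v]=V$, so $\{v\}$ is a dominating set and $\gamma(G)=1$. By Proposition~\ref{prop:increase} we already know $\gamma(G_e)\le \gamma(G)+1=2$ for every edge $e$, so it suffices to rule out $\gamma(G_e)=1$; equivalently, I would show that $G_e$ admits no single dominating vertex for any $e\in E$.

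Fix an edge $e=xy$ and let $w$ be the subdivision vertex, so that $V(G_e)=V\cup\{w\}$, the edge $xy$ is deleted, and the edges $xw,wy$ are added. The core step is a short case analysis showing that for no vertex $z\in V(G_e)$ is $N_{G_e}[z]=V(G_e)$. For $z=w$ one has $N_{G_e}[w]=\{w,x,y\}$, which has cardinality $3$, while $|V(G_e)|=n(G)+1\ge 4$ because we only consider graphs of order at least $3$; hence $w$ is not dominating. For $z\in V$, note that $w\in N_{G_e}[z]$ forces $z\in\{x,y\}$, so any vertex other than $x$ or $y$ fails to dominate $w$. The remaining two cases are the crucial ones: taking $z=x$, the deletion of $xy$ means $y\notin N_{G_e}[x]$ (as $x\ne y$ and they are no longer adjacent), so $\{x\}$ does not dominate $y$; the case $z=y$ is symmetric.

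Combining these cases, no singleton dominates $G_e$, whence $\gamma(G_e)\ge 2>1=\gamma(G)$. Since $e$ was arbitrary, subdividing any single edge strictly increases the domination number, so $G$ is $1$-critical (the defining condition for $q=1$ on the set of $q-1=0$ edges holds vacuously, as subdividing no edges leaves $G$ unchanged).

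I expect no serious obstacle here, as the statement is essentially a structural observation; the only points requiring care are the bookkeeping in the subdivision (remembering that the edge $xy$ is \emph{removed}, which is exactly what destroys the universality of the former universal vertex when it is an endpoint of $e$) and invoking the order bound $n(G)\ge 3$ to discard the degree-two subdivision vertex $w$ as a dominator.
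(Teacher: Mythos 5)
Your proof is correct, and all the delicate points are handled: the removal of the edge $xy$ (so an endpoint of $e$ no longer dominates the other endpoint), the fact that only $x$ and $y$ can dominate the subdivision vertex $w$, and the order bound $n(G)\geq 3$ ruling out $w$ itself as a dominator. Note, however, that the paper gives no proof of this observation---it is cited from Rad~\cite{rad13}---so there is no in-paper argument to match against. The route the paper's own machinery suggests is shorter than yours: by Theorem~\ref{prop:1crit} (also from~\cite{rad13}), $G$ is 1-critical if and only if every $\gamma$-set is a 2-packing; a universal vertex gives $\gamma(G)=1$, every $\gamma$-set is then a singleton, and a singleton is vacuously a 2-packing, so the observation follows in one line. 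Your direct case analysis is more elementary and self-contained (it does not presuppose the characterization theorem, which in the paper only appears later), at the cost of a few lines of bookkeeping; the characterization-based derivation buys brevity but imports a nontrivial theorem. Both are sound, and your verification that $\gamma(G_e)\geq 2$ for \emph{every} edge $e$---including edges not incident to the universal vertex, where the universal vertex fails only because it misses $w$---is exactly the content one needs.
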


\begin{obs}
If a complete bipartite graph $K_{s,t}$, with $2\leq s\leq t$, is $q$-critical, then
\[q=\begin{cases} t+1 &\textrm{if } s=2,\\
2& \textrm{otherwise.}\\
\end{cases}\]
\end{obs}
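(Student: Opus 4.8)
The plan is to start from the easy facts that pin down the setting. First I would record that $\gamma(K_{s,t})=2$ for all $2\le s\le t$: writing the parts as $A=\{a_1,\dots,a_s\}$ and $B=\{b_1,\dots,b_t\}$, any pair $\{a_i,b_j\}$ dominates (one vertex of each part dominates the whole opposite part), while no single vertex dominates both parts. Next I would note that subdividing edges does not change the degree of an original vertex, so in $G_F$ every $a_i$ still has degree $t$, every $b_j$ still has degree $s$, and each of the $|F|$ subdivision vertices has degree $2$; in particular the maximum degree of $G_F$ equals $t$ and $n(G_F)=s+t+|F|$. Throughout I would use Proposition \ref{prop:increase} in the form of monotonicity: if $F'\subseteq F$ and $\gamma(G_F)=\gamma(G)$, then $\gamma(G_{F'})=\gamma(G)$ as well, obtained by subdividing the edges of $F\setminus F'$ one at a time.

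I would treat $s\ge 3$ first. For $q\le 2$ I would show that subdividing any two edges forces $\gamma(G_F)\ge 3$. Suppose instead that $D=\{x,y\}$ dominates $G_F$ with $|F|=2$, and let $W$ be the set of subdivision vertices. Since each vertex of $A\cup W$ dominates at most one vertex of $A$ (itself if it lies in $A$, a single neighbour if it lies in $W$), a two-element subset of $A\cup W$ dominates at most $2<s$ vertices of $A$; hence $D$ must meet $B$, and symmetrically, using $t\ge s\ge 3$, $D$ must meet $A$. So after relabelling $D=\{a_1,b_1\}$. Domination of each $a_k$ with $k\neq1$ forces $a_kb_1\notin F$, and domination of each $b_k$ with $k\neq1$ forces $a_1b_k\notin F$; thus every edge of $F$ is either $a_1b_1$ or an edge $a_ib_k$ with $i,k\neq1$. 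But the subdivision vertex of such an $a_ib_k$ has neighbours $a_i,b_k\notin D$ and is undominated, so $a_1b_1$ is the only admissible edge, giving $|F|\le1$, a contradiction. For the matching lower bound I would check that one subdivision never increases $\gamma$: subdividing $a_1b_1$ with new vertex $w$, the pair $\{a_1,b_1\}$ dominates, because $a_1$ dominates $w$ and every $b_k$ with $k\neq1$, while $b_1$ dominates $w$ and every $a_i$ with $i\neq1$. Hence $q=2$.

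For $s=2$ the target is $q=t+1$. For the lower bound I would subdivide all $t$ edges incident to $a_1$; then $\{a_1,a_2\}$ still dominates, since every $b_j$ retains its edge to $a_2$ and every subdivision vertex is adjacent to $a_1$, so $\gamma(G_F)=2$. By monotonicity every subset of these $t$ edges also leaves $\gamma$ unchanged, so $q\ge t+1$. For the upper bound I would take any $F$ with $|F|=t+1$, so $n(G_F)=2t+3$; since the maximum degree of $G_F$ is $t$, any two-element set $\{x,y\}$ satisfies $|N_{G_F}[x]\cup N_{G_F}[y]|\le 2(t+1)=2t+2<n(G_F)$ and therefore cannot dominate. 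Thus $\gamma(G_F)\ge 3$, giving $q\le t+1$, and combining the bounds yields $q=t+1$.

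The $s=2$ analysis is routine: a crude vertex count closes it because the only high-degree vertices are $a_1,a_2$. The hard part will be the case $s\ge 3$ with $|F|=2$, where the count $n(G_F)=s+t+2\le 2(t+1)$ is not violated, so the degree bound alone is inconclusive; there one genuinely needs the structural step showing that a dominating pair must consist of one vertex from each part, which then pins down exactly which edges could have been subdivided and forces some subdivision vertex to remain undominated.
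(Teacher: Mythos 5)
Your proof is correct. Note that the paper states this observation without any proof (it is presented as a routine remark following the definitions), so there is no authorial argument to compare against; your write-up in fact supplies the missing verification. Both halves check out: for $s\ge 3$, the structural step showing a dominating pair of $(K_{s,t})_F$ must consist of one vertex from each part (each vertex of $A\cup W$ covers at most one vertex of $A$, and symmetrically for $B$, using $t\ge s\ge 3$) correctly pins $F$ down to the single edge $a_1b_1$ and yields the contradiction, while the single-edge subdivision of $a_1b_1$ visibly leaves $\{a_1,b_1\}$ dominating; for $s=2$, subdividing the $t$ edges at $a_1$ leaves $\{a_1,a_2\}$ dominating, and the degree count $2(\Delta+1)=2t+2<2t+3=n(G_F)$ for $|F|=t+1$ cleanly rules out any dominating pair. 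Your use of monotonicity of $\gamma$ under subdivision (Proposition \ref{prop:increase} applied edge by edge) to pass between set sizes is also sound, and as a sanity check your formula agrees with Observation \ref{opath} in the case $K_{2,2}=C_4$, where $q=3=t+1$.
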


\section{$\gamma$-$q$-critical graphs}

We begin this section with some definitions.

   The \emph{corona} $ G \odot H$ is defined as the graph obtained by taking {$n(G)$} copies of a graph $H$ and for each $i\leq n$ adding edges between the $i$th vertex of $G$ and each vertex of the $i$th copy of $H.$
 A \emph{spider} $S_t$ is a graph obtained from the star $K_{1,t}$ for $t \geq 1$ by subdividing each edge of the star.
 
  A \emph{d-wounded spider} $S_{t,t-d}$ is the graph formed by subdividing $t-d\leq t-1$ edges of a star $K_{1,t}, t \geq 1$  ($d$ is a number of edges that we do not subdivide; $d \geq 1$.
 If  $t \geq 2$ and exactly $t-1$ of the edges of a star is subdivided, i.e. $d=1$, then the resulting graph  is called a \emph{slightly wounded spider}.
 
The maximum cardinality of an independent set of $G$ is called the {\it independence number} of $G$ and denoted by $\alpha(G)$. An independent set of cardinality $\alpha(G)$ is called an $\alpha$-{\it set} of $G$.
 
\begin{prop}
  If $T=S_{t,t-k}$ is a $k$-wounded spider, then {$T$ is $q-$critical for} $q=n(T)-k$.
  \end{prop}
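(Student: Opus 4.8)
The plan is to first pin down $\gamma(T)$ and then to understand $\gamma(T_F)$ for an arbitrary edge set $F$ by exploiting the fact that every subdivision of $T$ is again a spider with the same centre. Write $c$ for the centre of the star, let $\ell_1,\dots,\ell_k$ be the $k$ leaves joined to $c$ by the \emph{unsubdivided} edges, and let $c\,s_i\,u_i$ for $1\le i\le t-k$ be the $t-k$ subdivided arms; then $n(T)=1+k+2(t-k)=2t-k+1$, and the edge set splits into the $k$ stick edges $c\ell_j$, the $t-k$ inner edges $cs_i$, and the $t-k$ outer edges $s_iu_i$. A short dichotomy (centre in or out of the dominating set) gives $\gamma(T)=t-k+1$, which I would record separately since it is used repeatedly.

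The crucial observation is that for any $F\subseteq E(T)$ the graph $T_F$ is still a spider centred at $c$: subdividing a stick edge turns a length-$1$ leg into a length-$2$ leg, while subdividing the inner and/or outer edge of an arm turns a length-$2$ leg into one of length $3$ or $4$. Let $b$ be the number of subdivided stick edges and $a$ the number of arms in which \emph{both} edges are subdivided. I would compute $\gamma(T_F)$ by the same dichotomy. If $c$ lies in the dominating set, then each of the $t-k$ arms costs exactly one further vertex no matter how it was subdivided, each subdivided (hence length-$2$) stick leg costs one, and the unsubdivided sticks cost nothing, so $\gamma_{\mathrm{in}}=1+(t-k)+b=t-k+1+b$; note that interior arm subdivisions do not raise this value. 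If $c$ is excluded, every leg must dominate itself: each stick costs one, and each arm costs one except the fully subdivided ones which cost two, so $\gamma_{\mathrm{out}}=k+(t-k)+a=t+a$. Here one uses $k\ge 1$, since a short stick leg always lets us dominate $c$ for free and hence no penalty term appears in $\gamma_{\mathrm{out}}$. Combining, $\gamma(T_F)=\min\{t-k+1+b,\;t+a\}$.

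From this closed form the critical number falls out: $\gamma(T_F)>\gamma(T)=t-k+1$ holds exactly when both $b\ge 1$ and $t+a>t-k+1$. For $k\ge 2$ the second inequality is automatic, so an increase occurs iff $F$ meets the set of stick edges; for $k=1$ it additionally requires $a\ge 1$. In either case the largest $F$ leaving $\gamma$ unchanged is obtained by subdividing all $2(t-k)=2t-2k$ non-stick edges (forcing $b=0$), and this set cannot be enlarged without adding a stick edge and thereby forcing an increase; since $T$ has only $2t-2k$ non-stick edges, no unchanged $F$ can be larger. Hence the smallest $F$ that necessarily increases $\gamma$ has size $2t-2k+1=n(T)-k$, as claimed. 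The main obstacle is the domination bookkeeping of the middle paragraph, namely verifying that interior arm subdivisions are ``free'' when $c\in D$ and that $c$ is dominated at no extra cost when $c\notin D$; once the formula for $\gamma(T_F)$ is established, the counting argument and the split into $k=1$ versus $k\ge 2$ are routine.
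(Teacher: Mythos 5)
Your proposal is correct, but it takes a genuinely different route from the paper's. You derive a closed formula $\gamma(T_F)=\min\{t-k+1+b,\;t+a\}$ valid for \emph{every} $F\subseteq E(T)$, via the centre-in/centre-out dichotomy on the subdivided spider, and then read off exactly which sets $F$ increase the domination number. The paper argues only at the two relevant cardinalities: it exhibits the same extremal set (all $2(t-k)$ arm edges, dominated by the centre together with the midpoints of the subdivided outer edges) to get $q\geq n(T)-k$, and for the upper bound uses pigeonhole --- any set of $n(T)-k$ edges must contain a pendant edge $xv_i$, $i\leq k$ --- combined with the support-count bound $\gamma(T_{A'})\geq |S(T_{A'})|>t-k+1$. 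Your computation is longer but buys a complete characterization of the non-increasing sets, and it is in fact more robust at $k=1$: there the paper's strict support inequality is delicate, since subdividing all $2t-1$ edges of the slightly wounded spider produces exactly $t=\gamma(T)$ support vertices, and the strict increase really comes from the $a\geq 1$ term that your formula isolates explicitly. One small point to tighten: your closing claim that no unchanged $F$ can be larger than $2t-2k$ is justified in your text only for sets with $b=0$ (``$T$ has only $2t-2k$ non-stick edges''); for $k=1$ an unchanged set may instead have $b=1$ provided $a=0$, but then $|F|\leq 1+(t-1)=t\leq 2t-2$ since $t\geq 2$ for a slightly wounded spider, so the bound survives --- this one-line check should be stated.
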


\begin{proof}
Note that $\gamma(T)=t-k+1$. Label the vertices of $T$ as follows: label the center vertex of $K_{1,t}$ with $x$ and its leaves with $\{v_1,\dots,v_t\}$. Subdivide the edges $xv_i$ with vertices $u_i$ for $i=k+1, \dots, t$.

Let $A=\{xu_i, u_iv_i|i=k+1,\dots,{t}\}$  be a set of $2(t-k)={n(T)}-k-1$ edges. If the edges in $A$ are subdivided, then the set $D=\{x,y_{k+1},\dots,y_{t}\}$ is a dominating set of cardinality $t-k+1$, where $y_i$ is the {subdivision vertex of} the edge $u_iv_i$.  Therefore, $q\geq {n(T)}-k$.

Let $A'$ be a set of ${n(T)}-k=2(t-k)+1$ edges. Then $A'$ necessarily contains at least one of the edges $xv_i$ for $i\leq k$. The subdivision of the edges in $A'$, producing {$T_{A'}$}, will then increase the number of support vertices of $T$ and hence {$\gamma(T_{A'})\geq |S(T_{A'})|>t-k+1=\gamma(T)$.} It follows that $T$ is $({n(T)}-k)$-critical.
\end{proof}
  
  \begin{wn}
  For each $q$ there exists a $q-$critical tree.
  \end{wn}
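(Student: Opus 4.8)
The plan is to prove the stronger statement that the spider $S_q$ is itself $q$-critical for every $q\ge 1$; since $S_1=P_3$, this single family settles all values of $q$ at once, and in particular it disposes of the even values of $q$ not produced by the wounded spiders of the previous proposition. Write the center of $S_q$ as $x$ and its legs as the paths $x-u_i-v_i$ with leaves $v_i$, for $i=1,\dots,q$. First I would record that $\gamma(S_q)=q$: the closed neighbourhoods $N[v_1],\dots,N[v_q]$ are pairwise disjoint, forcing $\gamma(S_q)\ge q$, while $\{u_1,\dots,u_q\}$ is dominating, so $\gamma(S_q)\le q$.

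Next, to show that the critical number is not smaller than $q$, I would exhibit a set of $q-1$ edges whose subdivision leaves $\gamma$ unchanged. Subdividing the inner edge $xu_i$ in each of $q-1$ legs turns those into legs of length $3$ of the form $x-a_i-u_i-v_i$, while one leg, say the $q$-th, keeps length $2$. The set $\{u_1,\dots,u_q\}$ still dominates the resulting tree, because $u_q$ (on the untouched length-$2$ leg) continues to cover the center $x$, whereas on each lengthened leg $u_i$ covers $a_i,u_i,v_i$. Hence $\gamma$ does not increase, so $S_q$ is not $(q-1)$-critical and its critical number is at least $q$.

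The substantive part is to prove that subdividing \emph{any} $q$ edges increases $\gamma$. Any such subdivision yields a spider whose legs have lengths $\ell_i\in\{2,3,4\}$ (each leg is subdivided $0$, $1$ or $2$ times) with $\sum_i(\ell_i-2)=q$. Writing $n_2,n_3,n_4$ for the numbers of legs of each length, the two relations $n_2+n_3+n_4=q$ and $n_3+2n_4=q$ force $n_4=n_2$. I would then compute $\gamma$ of the subdivided spider exactly. Placing $x$ in the dominating set and adding one vertex per leg (each residual path $P_{\ell_i-1}$ needs $\lceil(\ell_i-1)/3\rceil=1$ vertex) gives $\gamma\le q+1$. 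For the matching lower bound, a dominating set of size $q$ would have to consist of exactly one vertex in each disjoint $N[v_i]$: a length-$4$ leg cannot be dominated by a single such vertex, and if there are no length-$4$ legs then $n_2=n_4=0$ forces every leg to have length $3$, in which case the $q$ chosen vertices are the $u_i$'s and leave the center $x$ undominated --- a contradiction either way. Thus $\gamma=q+1>q$ for every $q$-subset, which (together with the previous paragraph) shows that $S_q$ is $q$-critical. The main obstacle is this last uniform lower bound $\gamma\ge q+1$: the case analysis must simultaneously rule out the length-$4$ legs, which secretly cost two dominators, and the all-length-$3$ configuration, in which the center is stranded, and the identity $n_4=n_2$ is exactly what ties these two failure modes together.
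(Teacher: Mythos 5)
Your proposal is correct, and it takes a genuinely different route from the paper. The paper treats this as an immediate corollary of the preceding proposition on $k$-wounded spiders: $S_{t,t-k}$ is shown to be $q$-critical for $q=n(T)-k$, with the increase in $\gamma$ detected by counting support vertices after any sufficiently large edge set forces the subdivision of a pendant edge $xv_i$. You instead prove that the full (healthy) spider $S_q$ is $q$-critical for every $q\geq 1$, by computing $\gamma$ of every subdivided spider exactly: your counting identity $n_2=n_4$ correctly ties together the two failure modes (a length-$4$ leg strands the vertex next to the centre, while the all-length-$3$ configuration strands the centre itself), the disjoint leaf neighbourhoods give the lower bounds, and subdividing the $q-1$ inner edges $xu_i$ gives the matching non-increasing set. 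A point in your favour worth making explicit: since $n(S_{t,t-k})-k=2(t-k)+1$ is always odd, the paper's wounded-spider family only produces $q$-critical trees for odd $q$, so the corollary as stated does not literally follow from that proposition alone for even $q$; your single family $S_q$ covers every value of $q$ uniformly and thus supplies a complete argument where the paper's implicit one has a gap. The cost is a somewhat longer case analysis; what it buys is uniformity and the exact value $\gamma((S_q)_F)=q+1$ for every $q$-element edge set $F$.
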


Even if the graph is without leaves, we can obtain a similar result. Construct the graph $G_k$  for $k\geq 1$, as follows: take $k$ 4-cycles $H_i\simeq(x_i,y_i,z_i,v_i,x_i)$ for $i=1,\dots,k$. Now indentify vertices $v_i$ with each other to obtain the vertex $v$.

\begin{figure}[h]
 \includegraphics[scale=1]{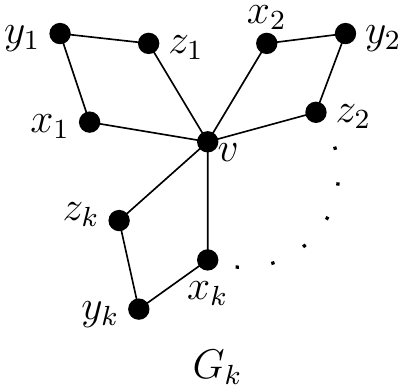}
\caption{The graph $G_k$.}
\label{fig:gk}
\end{figure} 

\begin{prop}
The graph $G_k$ is $q$-critical for $q=n(G_k)-k$ where $k\geq 1$.
\end{prop}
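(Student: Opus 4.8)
The plan is to first record the basic parameters and the domination number, and then verify the two halves of criticality separately: that some $2k$ edges can be subdivided \emph{without} raising $\gamma$, and that every $2k+1$ edges \emph{do} raise it. Counting vertices and edges gives $n(G_k)=3k+1$ and $m(G_k)=4k$, so the claim is exactly that $G_k$ is $(2k+1)$-critical. I would begin by establishing $\gamma(G_k)=k+1$. The upper bound follows from $\{v\}\cup\{y_1,\dots,y_k\}$, since $v$ dominates every $x_i,z_i$ and each $y_i$ dominates itself. For the lower bound, note the closed neighbourhoods $N[y_i]=\{x_i,y_i,z_i\}$ are pairwise disjoint, forcing $\gamma(G_k)\ge k$; and a dominating set of exactly $k$ vertices would omit $v$ and meet each $N[y_i]$ once, so to dominate $v$ some $x_j$ or $z_j$ would lie in it, leaving the opposite vertex of that $4$-cycle (whose only neighbours are $y_j$ and $v$) undominated, a contradiction. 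Hence $\gamma(G_k)=k+1$.

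For the easy half I would exhibit $q-1=2k$ edges whose subdivision keeps the domination number. Subdividing the two edges $x_iy_i$ and $y_iz_i$ incident with each $y_i$ turns every petal into a $6$-cycle $v,x_i,a_i,y_i,b_i,z_i$; the set $\{v\}\cup\{y_1,\dots,y_k\}$ still dominates the resulting graph (now $y_i$ covers the subdivision vertices $a_i,b_i$), so by Proposition~\ref{prop:increase} the domination number stays equal to $k+1$. This shows $q\ge 2k+1$.

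The substantial half is to prove that every $F\subseteq E(G_k)$ with $|F|=2k+1$ satisfies $\gamma((G_k)_F)\ge k+2$. Writing $f_i$ for the number of subdivided edges in petal $i$, we have $0\le f_i\le 4$ and $\sum_i f_i=2k+1$; in $(G_k)_F$ petal $i$ becomes a cycle through $v$ of length $4+f_i$, and deleting $v$ leaves a path $P^i$ on $p_i=3+f_i$ vertices whose endpoints are the two neighbours of $v$ in that petal. Let $D$ be a $\gamma$-set. If $v\in D$, then $v$ dominates only the two endpoints of each $P^i$, so the remaining $f_i+1$ consecutive internal vertices require at least $\lceil (f_i+1)/3\rceil$ vertices of $D$ inside petal $i$; since $\sum f_i>2k$ some petal has $f_i\ge 3$ and hence needs at least $2$, giving $|D|\ge 1+(k-1)+2=k+2$. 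If $v\notin D$, then each $P^i$ must be dominated entirely from within its own petal, so $|D|\ge\sum_i\lceil p_i/3\rceil=k+\sum_i\lceil f_i/3\rceil\ge k+\lceil(2k+1)/3\rceil$, which is at least $k+2$ as soon as $k\ge 2$.

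The one case the second estimate misses is $v\notin D$ with $k=1$, and this is the main obstacle: there $G_1$ is a single $4$-cycle, $f_1=3$, and $P^1=P_6$ has length divisible by $3$, so a minimum dominating set of $P_6$ uses no endpoint and thus fails to dominate $v$. I would close this by the small observation that dominating $P_m$ \emph{and} including an endpoint costs $\gamma(P_m)+1$ whenever $3\mid m$, which forces $|D|\ge\gamma(P_6)+1=3=k+2$; alternatively one simply notes $G_1\cong C_4$ and applies Observation~\ref{opath}. Combining the two halves yields $q=2k+1=n(G_k)-k$.
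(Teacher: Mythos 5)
Your proof is correct and follows essentially the same route as the paper's: establish $\gamma(G_k)=k+1$, exhibit $2k$ edges whose subdivision preserves it (you subdivide $x_iy_i,y_iz_i$ while the paper uses $vx_i,vz_i$ --- equivalent, since either choice turns each petal into a $6$-cycle dominated by $\{v,y_i\}$), then apply pigeonhole to any $2k+1$ edges to find a petal with at least three subdivisions. Your case analysis on $v\in D$ versus $v\notin D$, and the separate treatment of $k=1$, merely make rigorous the paper's one-line assertion that the resulting cycle of length at least $7$ requires three dominators.
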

 
\begin{proof}
Note that $\gamma(G_k)=k+1$ and any $\gamma$-set $D$ of $G_k$ contains $v$ and $|D\cap\{x_i,y_i,z_i\}|=1$. Also, $n=n(G_k)=3k+1$. Now let $F=\{vx_i,vz_i\colon i=1,\dots,k\}$ and consider $(G_k)_F$. The set $D'=\{v\}\cup\{y_i\colon i=1,\dots,k\}$ is a $\gamma$-set of $(G_k)_F$ of cardinality $k+1$ and therefore {$q\geq |F|+1=n-k$.}

On the other hand if we subdivide any set $F'$ of $n-k$ edges, then there exists a $j\leq k$ such that $|E(H_j)\cap F'|\geq 3$. This copy becomes a cycle of the length at least 7 and we need at least 3 vertices to dominate it. Therefore $\gamma((G_k)_{F'})> \gamma(G_k)$ and hence $G_k$ is $(n-k)$-critical.
\end{proof}

It is also possible for $q$ to be larger than $n$. Let $A_k$ be the graph obtained from $K_{3,k+5}$, for $k\geq 0$, by adding a leaf to each of the vertices in the partite set $V_1$, where $|V_1|=3$. Let $V_1=\{v_1,v_2,v_3\}$, $V_2=\{u_1,\dots,u_{k+5}\}$ and label the leaves $x_i$ for $i=1,2,3$.

\begin{prop}
The graph $A_k$ is $q$-critical for $q=n(A_k)+k$ where  $k\geq 0$.
\end{prop}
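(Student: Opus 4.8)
The plan is to first pin down $\gamma(A_k)$ and then verify the two defining conditions of $(2k+11)$-criticality separately, recording that $n(A_k)=3+(k+5)+3=k+11$ (so $q=n(A_k)+k=2k+11$) and that $A_k$ has $3(k+5)+3=3k+18$ edges. For the domination number, the three leaves $x_1,x_2,x_3$ have pairwise disjoint closed neighbourhoods $\{x_i,v_i\}$, so any dominating set must contain a vertex of each, giving $\gamma(A_k)\ge 3$; and $\{v_1,v_2,v_3\}$ dominates everything, since each $u_j$ is adjacent to all three $v_i$. Hence $\gamma(A_k)=3$.

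To show $q\ge 2k+11$ I would exhibit a set of $q-1=2k+10$ edges whose subdivision leaves $\gamma$ unchanged. Take $F$ to consist, for each $u_j$, of two of its three edges to $V_1$, keeping one un-subdivided; this is $2(k+5)=2k+10$ edges. Then $\{v_1,v_2,v_3\}$ still dominates $(A_k)_F$: every subdivision vertex lies on a core edge incident to some $v_i$ and so is dominated by that $v_i$; each leaf $x_i$ is still adjacent to $v_i$; and each $u_j$ keeps an un-subdivided edge to some $v_i$. Thus $\gamma((A_k)_F)=3$ by Proposition~\ref{prop:increase}, so subdividing these $2k+10$ edges does not increase the domination number.

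The substantive half is showing that subdividing any $2k+11$ edges $F'$ forces $\gamma\ge 4$. I argue by contradiction: suppose $(A_k)_{F'}$ has a dominating set $D$ with $|D|=3$. The leaves again have disjoint closed neighbourhoods $N[x_i]\subseteq\{x_i,v_i,w_i\}$ (with $w_i$ the possible subdivision vertex on $v_ix_i$), so $D$ contains exactly one vertex of each; in particular $D$ contains no $u_j$ and no subdivision vertex of a core edge. Put $P=\{i:v_i\in D\}$. The key local observation is that a subdivision vertex on a core edge $v_iu_j$ is adjacent only to $v_i$ and $u_j$, so if $i\notin P$ it is undominated unless $v_iu_j$ was left un-subdivided; hence for every $i\notin P$ all $k+5$ core edges at $v_i$ are un-subdivided. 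Moreover each $u_j\notin D$ can only be dominated through an un-subdivided edge to some $v_i$ with $i\in P$, so the un-subdivided core edges meeting $\{v_i:i\in P\}$ must cover all $k+5$ vertices of $V_2$, hence number at least $k+5$. Counting now closes the argument: if $|P|\le 2$, the vertices outside $P$ force $(3-|P|)(k+5)$ un-subdivided core edges and $P$ forces at least $k+5$ more, giving at least $(4-|P|)(k+5)\ge 2(k+5)=2k+10$ un-subdivided edges, whereas only $3k+18-(2k+11)=k+7<2k+10$ edges are un-subdivided; if $|P|=3$, i.e. $D=\{v_1,v_2,v_3\}$, dominating the leaves forces all three leaf edges un-subdivided, leaving just $k+7-3=k+4<k+5$ un-subdivided core edges, too few to reach every $u_j$. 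Either way $\gamma((A_k)_{F'})\ge 4$, so $q\le 2k+11$, and together with the previous paragraph $q=2k+11=n(A_k)+k$.

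I expect the main obstacle to be this upper-bound direction, specifically isolating the two independent mechanisms that block $3$-domination — un-dominated core subdivision vertices when $v_i\notin D$, and un-reachable vertices $u_j$ when too many core edges are cut — and packaging them into the single edge count $(4-|P|)(k+5)\le k+7$. The degenerate case $|P|=3$ must be split off, because there the obstruction comes from the leaf edges rather than from the bipartite core.
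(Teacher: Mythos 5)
Your proof is correct, and its main half takes a genuinely different route from the paper's. The lower bound coincides: the paper takes $F=\{v_1u_i,v_2u_i\colon i\le k+5\}$, which is exactly a special case of your ``two of the three edges at each $u_j$'' construction, and both arguments conclude $q\ge 2k+11$ the same way. For the upper bound, however, the paper performs a case analysis on the edge set $F'$ itself --- whether some $u_i$ is incident to three edges of $F'$, whether a pendant edge lies in $F'$, whether additionally some $v_1u_j\in F'$ --- and in two of these cases it merely asserts (``clearly'', ``it is easy to check'') that the domination number increases. You instead fix a hypothetical dominating set $D$ of size $3$ in $(A_k)_{F'}$, use the pairwise disjoint closed neighbourhoods of the leaves to force $D\subseteq\bigcup_i\{x_i,v_i,w_i\}$ (so no $u_j$ and no core subdivision vertex lies in $D$), and then reduce everything to a single count of un-subdivided edges: each $v_i\notin D$ forces all $k+5$ of its core edges un-subdivided, covering $V_2$ forces at least $k+5$ un-subdivided core edges at the $v_i\in D$, and these two families are disjoint, so $|P|\le 2$ demands $(4-|P|)(k+5)\ge 2k+10$ un-subdivided edges against only $k+7$ available, while $|P|=3$ is blocked by the three leaf edges leaving only $k+4<k+5$ usable core edges. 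This buys a complete, fully checkable proof where the paper leaves the hardest verifications to the reader; indeed your analysis supplies precisely the omitted details (the paper's first case, a $u_i$ with all three edges subdivided, is your observation that such a $u_i$ cannot be dominated by any admissible $D$). The paper's structural case split is shorter on the page, but only because of those gaps; your uniform counting argument is the more rigorous packaging of the same phenomenon.
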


\begin{proof}
Note that $\gamma(A_k)=3$ and $V_1$ is the unique $\gamma$-set of $A_k$. Also, $n=n(A_k)=k+11$. Now let $F=\{v_1u_i,v_2u_i\colon i=1,\dots,k+5 \}$. Then $V_1$ is still a $\gamma$-set of $(A_k)_F$ and therefore $q\geq 2k+11=n+k$.

On the other hand consider any set $F'$ of $n+k$ edges. If there is at least one $u_i$  incident to three edges of $F'$, then $\gamma((A_k)_{F'})> \gamma(A_k)$. Otherwise, every $u_i$ is incident to at most two edges of $F'$ and therefore at least one pendant edge, say $x_1v_1$, belongs to $F'$. If  $v_1u_j\in F'$ for some $j\leq k+5$, then clearly $\gamma((A_k)_{F'})> \gamma(A_k)$. If $v_1u_j\not\in F'$ for all $j\leq k+5$, then $F'$ contains at least $2k+8$ edges of the form $v_iu_j$ for $j=2,3$. It is easy to check that  $\gamma((A_k)_{F'})> \gamma(A_k)$. It now follows that $A_k$ is $(n+k)$-critical.
\end{proof}

\begin{prop}\label{prop:corona}
  Let $G$ be a graph of order $n(G)$ and size $m(G)$. Then $H=G\odot K_1$ is $(m(G)+1+\alpha(G))$-critical.
  \end{prop}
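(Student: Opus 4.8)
The plan is to first pin down the domination number of $H=G\odot K_1$ together with the structure of its $\gamma$-sets, and then translate the critical condition into a purely combinatorial optimization over edge subsets. Write $n=n(G)$, let $v_1,\dots,v_n$ be the vertices of $G$, and let $\ell_i$ be the leaf attached to $v_i$. Since each leaf $\ell_i$ can only be dominated from the pair $\{v_i,\ell_i\}$, and these $n$ pairs are pairwise disjoint, we get $\gamma(H)\ge n$; as $V(G)$ dominates $H$ we conclude $\gamma(H)=n$, and moreover any dominating set of size exactly $n$ must take precisely one vertex from each pair $\{v_i,\ell_i\}$ and contain nothing else. I would record this rigid description explicitly, because the whole argument hinges on controlling the size-$n$ dominating sets that survive subdivision.

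Next I would fix a subdivision set $F\subseteq E(H)$ and split it as $F=F_G\cup F_P$, where $F_G$ consists of subdivided edges of $G$ and $F_P$ of subdivided pendant edges; let $B\subseteq V(G)$ be the set of vertices whose pendant edge is subdivided, so $|F_P|=|B|$. In $H_F$ the leaves are still exactly the $\ell_i$, and the pair forced to meet every dominating set is $\{v_i,\ell_i\}$ when $i\notin B$ and $\{p_i,\ell_i\}$ when $i\in B$, where $p_i$ is the new support vertex on the subdivided pendant path $v_i p_i \ell_i$. These $n$ pairs are again disjoint, so $\gamma(H_F)\ge n$, and $\gamma(H_F)=n$ exactly when a size-$n$ dominating set $D$ exists; such a $D$ picks one vertex from each pair and therefore contains no $v_i$ with $i\in B$ and no subdivision vertex at all. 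The key lemma I would then establish is
\[
\gamma(H_F)=\gamma(H)\quad\Longleftrightarrow\quad F_G\ \text{contains no edge with both endpoints in }B .
\]
For the forward implication (by contraposition), if $v_iv_j\in F_G$ with $i,j\in B$, then the internal subdivision vertex of that edge has its only neighbours $v_i,v_j$ outside every admissible $D$, so it cannot be dominated and $\gamma(H_F)>n$. For the converse, choosing $p_i$ for every $i\in B$ and $v_i$ for every $i\notin B$ produces a size-$n$ dominating set whenever no subdivided edge lies inside $B$, since then each internal subdivision vertex has an endpoint outside $B$ (hence in the set) and each $v_i$ with $i\in B$ is covered by $p_i$. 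Checking that this candidate dominates the remaining vertices is routine case work; I expect the genuine obstacle to be the forward implication, namely arguing cleanly that \emph{no} size-$n$ dominating set can exist once a subdivided internal edge sits inside $B$, which is exactly where the rigid one-per-pair structure is indispensable.

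Finally I would optimize. By the lemma, the largest $F$ preserving $\gamma$ is obtained by choosing $B$ and subdividing every edge of $G$ except those inside $B$, giving $|F|=\bigl(m(G)-e(B)\bigr)+|B|=m(G)+\bigl(|B|-e(B)\bigr)$, where $e(B)$ is the number of edges of $G$ with both ends in $B$. It then remains to show $\max_{B}\bigl(|B|-e(B)\bigr)=\alpha(G)$. The inequality $|B|-e(B)\le\alpha(G)$ follows by applying the elementary bound $\alpha\ge(\text{vertices})-(\text{edges})$ to the induced subgraph $G[B]$, together with $\alpha(G)\ge\alpha(G[B])$, while equality is attained at a maximum independent set $B$ (for which $e(B)=0$). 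Hence the maximum number of edges that can be subdivided without changing $\gamma(H)$ equals $m(G)+\alpha(G)$, so every set of $m(G)+1+\alpha(G)$ edges raises the domination number while some set of $m(G)+\alpha(G)$ edges does not; by the Remark this is precisely the assertion that $H$ is $(m(G)+1+\alpha(G))$-critical.
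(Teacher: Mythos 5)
Your proof is correct, and while its engine is the same as the paper's --- the $n(G)$ pairwise disjoint closed neighbourhoods of the leaves pin down $\gamma(H)=n(G)$ and force the rigid one-vertex-per-pair structure of minimum dominating sets, and the obstruction in both arguments is an undominated subdivision vertex on an edge of $G$ whose two endpoints both have subdivided pendant edges --- you package it differently, and more informatively. The paper only exhibits a single $\gamma$-preserving set of size $m(G)+\alpha(G)$ (all of $E(G)$ plus the pendant edges at an $\alpha$-set, which is exactly your optimal choice of $B$ with $e(B)=0$) and then, for an arbitrary set $B$ of $m(G)+1+\alpha(G)$ edges, asserts rather tersely that indices $j,k$ exist with $v_ju_j,v_ku_k\in B$ and $v_jv_k\in E(G)\cap B$; your version upgrades this to an exact characterization ($\gamma(H_F)=\gamma(H)$ if and only if $F_G$ contains no edge inside $B$) followed by the clean optimization identity $\max_B\bigl(|B|-e(B)\bigr)=\alpha(G)$, proved via $\alpha(G)\ge\alpha(G[B])\ge |B|-e(B)$ --- which is precisely the counting justification the paper leaves implicit, so your route both fills a real gap in the published argument and yields strictly more (a description of \emph{all} $\gamma$-preserving subdivision sets, not just a maximum one). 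Two cosmetic points: in your rigidity statement, ``contains no subdivision vertex at all'' should read ``no subdivision vertex of an edge of $G$,'' since the vertices $p_i$ are themselves subdivision vertices and are allowed (indeed used) in $D$; and, as in the paper, one implicitly needs $G$ to have at least one edge, equivalently $\alpha(G)\le n(G)-1$, so that $H$ actually possesses $m(G)+1+\alpha(G)$ edges to subdivide.
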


\begin{proof}
Of course, every minimum dominating set of $H=G \odot K_1$ has cardinality $n(G).$ Let $A$ be an $\alpha$-set of $G$. Then $D=V(G) \setminus A$ is a dominating set of $G$. Label the vertices of $G$ as $v_1,v_2,\dots,v_n$, where $v_1,v_2,\ldots v_{\alpha(G)}\in A$ and  label the copies of $K_1$ in $H$ with $u_i$. 

Let $F=E(G)\cup \{v_{i}u_{i},\ i=1,\dots,\alpha(G)\}$ and consider $H_F$. The set $D\cup\{w_{1},\dots,w_{\alpha(G)}\}$, where $w_{i}$ is a subdivision vertex of $v_{i}u_{i}$, is a dominating set of $H_F$ of cardinality $n(G)$ showing that the subdivision of $|F|=m(G)+\alpha(G)$ edges does not increase the domination number of $H$.

Now consider a set $B$ of $m(G)+1+\alpha(G)$ edges of $H$ and let $B'=B\cap\{v_iu_i\colon 1 \leq i \leq n\}$. Then $|B'|\geq \alpha(G)+c$ with $c\geq 1$ and there exist edges $v_ju_j, v_ku_k\in B$ such that $v_jv_k \in E(G)$ where $j,k$ can be chosen in such a way that $v_jv_k \in B$.

 Let us consider $H_B$ and let $D'$ be a $\gamma$-set of $H_B$. Then $D_1=D'\cap\{w_i,u_i\}\neq \varnothing$ for each $v_iu_i\in B'$ where $w_i$ subdivides $v_iu_i$ and $D_2=D'\cap\{v_i,u_i\}\neq\varnothing$ for $v_iu_i\not\in B' $. The set $D_1\cup D_2$ however does not dominate the subdivision vertex of the edge $v_jv_k$ and since $|D_1\cup D_2|\geq n(G)$, we have $|D'|>n=\gamma(H)$. This proves that $H$ is $(m(G)+1+\alpha(G))$-critical.
\end{proof}
 \begin{wn}\label{cor_tree}
  If $G$ is a tree, then  $H=G\odot K_1$ is $(n(G)+\alpha(G))$-critical.
 \end{wn}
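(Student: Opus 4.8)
The plan is to derive this as an immediate specialization of Proposition \ref{prop:corona}. That proposition establishes, for an arbitrary graph $G$ of order $n(G)$ and size $m(G)$, that $H = G \odot K_1$ is $(m(G) + 1 + \alpha(G))$-critical, and nothing in its statement restricts the class of $G$. So the entire task reduces to rewriting the criticality value $m(G) + 1 + \alpha(G)$ under the extra hypothesis that $G$ is a tree.

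The single ingredient I would invoke is the elementary fact that a tree on $n(G)$ vertices is connected and acyclic, and hence has exactly $m(G) = n(G) - 1$ edges. Substituting this into the formula gives
\[
m(G) + 1 + \alpha(G) = \bigl(n(G) - 1\bigr) + 1 + \alpha(G) = n(G) + \alpha(G),
\]
which is precisely the claimed value. Thus $H = G \odot K_1$ is $(n(G) + \alpha(G))$-critical.

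There is essentially no obstacle here: the corollary is a direct arithmetic substitution, and the only thing worth a sentence of care is confirming that Proposition \ref{prop:corona} applies to trees without any hidden assumption (it does, since the proof of that proposition only uses a maximum independent set $A$, the complementary dominating set $V(G)\setminus A$, and the edge set $E(G)$, all of which are well-defined for a tree). Hence the proof is a one-line invocation followed by the edge-count identity for trees.
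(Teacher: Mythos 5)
Your proof is correct and matches the paper exactly: the corollary is stated there without further argument precisely because it follows from Proposition~\ref{prop:corona} by substituting $m(G)=n(G)-1$ for a tree. Nothing further is needed.
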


  Since $G=K_{1,r}\odot K_1$ has $2r+2$ vertices, $n(K_{1,r})=r+1$ and $\alpha(K_{1,r})=r$, $r\geq 1$  it follows that $K_{1,r} \odot K_1$ is $(n(G)-1)$-critical. 

  \begin{wn}\label{cor1}
  If $T$ is a $q-$critical tree, then $1\leq q\leq n(T)-1$ and these bounds are tight.
 \end{wn}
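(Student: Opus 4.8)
The plan is to derive both inequalities from the single fact that a tree on $n(T)$ vertices has exactly $n(T)-1$ edges, and then to exhibit one extremal tree at each end of the range. The lower bound is immediate: subdividing no edge leaves $\gamma(T)$ unchanged, so $q\ge 1$, which is in any case built into the definition of $q$-criticality.

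For the upper bound I would argue through the structure of the relevant edge families. Let $S$ be the set of integers $k\ge 1$ for which subdividing every $k$-element subset of $E(T)$ strictly increases $\gamma(T)$; by definition $T$ is $q$-critical exactly when $S\neq\varnothing$, in which case $q=\min S$. Iterating Proposition \ref{prop:increase} shows that for $F\subseteq F'$ one has $\gamma(T_F)\le\gamma(T_{F'})$, since $T_{F'}$ arises from $T_F$ by further subdivisions and no single subdivision lowers the domination number; hence if every $k$-subset of $E(T)$ increases $\gamma$ then so does every $(k+1)$-subset, i.e. $S$ is an up-set in $\{1,\dots,|E(T)|\}$. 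Consequently $q=\min S\le|E(T)|=n(T)-1$.

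It remains to certify tightness within the class of trees. For the value $q=1$ I would invoke Observation \ref{opath}: the path $P_3$ is a tree with $3\equiv 0\bmod 3$ and is therefore $1$-critical (equally, any star $K_{1,r}$ with $r\ge 2$ has a universal vertex and so is $1$-critical). For $q=n(T)-1$ I would reuse the computation just before the statement: by Corollary \ref{cor_tree} the tree $K_{1,r}\odot K_1$ is $\bigl(n(K_{1,r})+\alpha(K_{1,r})\bigr)=(2r+1)$-critical, and since this graph has $2r+2$ vertices it is $(n(T)-1)$-critical. I expect no genuine obstacle here; the only step needing care is the up-set bookkeeping of the previous paragraph, which is precisely what makes the otherwise purely syntactic bound $q\le|E(T)|$ legitimate once one knows that $T$ is critical at all.
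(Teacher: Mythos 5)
Your proposal is correct and takes essentially the same route as the paper, which states this corollary without a separate proof: the bound $q\le m(T)=n(T)-1$ is immediate from the definition of $q$-criticality (the value $q$ is by definition the cardinality of an actual subset of $E(T)$), tightness at $q=1$ comes from any $1$-critical tree such as $P_3$ (Observation~\ref{opath}) or a star, and tightness at $q=n(T)-1$ comes from the $K_{1,r}\odot K_1$ computation via Corollary~\ref{cor_tree} given just before the statement, exactly as you argue. Your up-set observation via iterated Proposition~\ref{prop:increase} is sound and harmless extra bookkeeping rather than a genuinely different approach.
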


Rad characterized the case where $q=1$ as follows:

\begin{thm}\cite{rad13}\label{prop:1crit}
A graph $G$ is 1-critical if and only if every $\gamma$-set of $G$ is a 2-packing.
\end{thm}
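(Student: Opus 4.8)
The plan is to prove Theorem~\ref{prop:1crit} by establishing both implications through the contrapositive, exploiting Proposition~\ref{prop:increase} together with the Corollary stating that in a $q$-critical graph every vertex outside a $\gamma$-set has at most $q$ neighbours inside it. Recall that a $\gamma$-set $D$ is a $2$-packing exactly when the closed neighbourhoods $\{N_G[d] : d \in D\}$ are pairwise disjoint, equivalently $d_G(x,y) > 2$ for all distinct $x,y \in D$; since $D$ dominates $V$, this forces the closed neighbourhoods to partition $V$, so each vertex of $V \setminus D$ has exactly one neighbour in $D$. I would lead with this reformulation because it is the structural fact that links the $2$-packing condition to what happens under a single subdivision.

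For the forward direction I would argue the contrapositive: suppose some $\gamma$-set $D$ is \emph{not} a $2$-packing, and show $G$ is not $1$-critical, i.e. exhibit a single edge whose subdivision leaves $\gamma$ unchanged. If $D$ fails to be a $2$-packing, there are distinct $x,y \in D$ with $d_G(x,y) \le 2$. If $xy \in E(G)$, then subdividing the edge $xy$ with a new vertex $w$ keeps $D$ dominating in $G_{xy}$, since $w$ is adjacent to both $x$ and $y$; hence $\gamma(G_{xy}) \le \gamma(G)$ and equality follows from Proposition~\ref{prop:increase}. If instead $d_G(x,y) = 2$, there is a common neighbour $z \in V \setminus D$ (as $x,y \in D$), and then $z$ has at least two neighbours in $D$; by the Corollary this already shows $G$ is not $1$-critical. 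Either way $G$ fails to be $1$-critical, completing this direction.

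For the converse I would again use the contrapositive: assume $G$ is not $1$-critical and produce a $\gamma$-set that is not a $2$-packing. Not being $1$-critical means there exists a single edge $e = uv$ whose subdivision does not increase the domination number, so $\gamma(G_e) = \gamma(G)$. Take a $\gamma$-set $D'$ of $G_e$ with $|D'| = \gamma(G)$, and let $w$ be the subdivision vertex on $e$. The idea is to convert $D'$ into a $\gamma$-set of $G$ of the same size that contains two vertices at distance at most $2$, witnessing the failure of the $2$-packing property. The natural case analysis is on whether $w \in D'$: if $w \notin D'$, then $D'$ dominates $w$ in $G_e$, so $D'$ contains $u$ or $v$; I would check that $D'$ (or a small modification replacing $w$ by $u$ or $v$ when $w \in D'$) is dominating in $G$ and contains the endpoints $u,v$, which are at distance $2$ in $G$, or else contains a vertex at distance $\le 2$ from another member of $D'$.

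The main obstacle I anticipate is precisely this conversion in the converse direction: when $w \in D'$, replacing $w$ by one endpoint, say $u$, yields a set dominating everything in $G$ that $w$ covered except possibly $v$ and its private neighbours, so I must argue that either $v$ is already dominated by the rest of $D'$ (giving a $\gamma$-set of $G$ containing two close vertices) or handle the residual case carefully. The cleanest route is likely to show that the resulting $\gamma$-set $D$ of $G$ must contain some vertex within distance $2$ of another, because otherwise $D$ would be a $2$-packing dominating $G$, and one checks directly that subdividing any edge then strictly increases $\gamma$ (each edge lies in exactly one closed neighbourhood of the partition, and its subdivision breaks domination), contradicting $\gamma(G_e) = \gamma(G)$. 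Framing the argument so that ``$2$-packing $\Rightarrow$ $1$-critical'' is proved directly and reused here keeps the case work contained.
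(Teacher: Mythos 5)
The paper itself gives no proof of this theorem --- it is quoted from Rad \cite{rad13} --- so your proposal has to stand on its own. Your forward direction is essentially correct, up to one small slip: when $d_G(x,y)=2$ the common neighbour $z$ need not lie in $V\setminus D$ (your parenthetical ``as $x,y\in D$'' does not justify this); if $z\in D$, then $xz$ is an edge with both ends in $D$ and your first case applies, so this is easily repaired. (Also, the endpoints $u,v$ of a subdivided edge are at distance $1$ in $G$, not $2$; harmless.)

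The converse, however, has a genuine gap, located exactly where you anticipated trouble. Your fallback claim --- that if the converted set $D$ were a $2$-packing $\gamma$-set of $G$, then ``one checks directly that subdividing any edge strictly increases $\gamma$'' --- is false, and so is the sub-claim supporting it (``each edge lies in exactly one closed neighbourhood of the partition''). Take $G=P_4$ with vertices $1,2,3,4$: the $\gamma$-set $\{1,4\}$ is a $2$-packing, the edge $23$ has its endpoints in the two \emph{different} cells $N_G[1]$ and $N_G[4]$, and subdividing any edge of $P_4$ yields $P_5$ with $\gamma(P_5)=2=\gamma(P_4)$. Breaking one $2$-packing set does not raise $\gamma$, because a different $\gamma$-set can repair the subdivided graph; this is precisely why the theorem quantifies over \emph{every} $\gamma$-set, and the existence of a single $2$-packing $\gamma$-set implies nothing about criticality. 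Your case analysis really does land in this bad spot: with $G=P_4$, $e=12$, and $D'=\{w,4\}$ a $\gamma$-set of $G_e=P_5$, replacing $w$ by the leaf $1$ produces exactly the $2$-packing $\{1,4\}$, and your argument stalls on a false lemma. The fix is to choose the endpoint wisely rather than invoke the fallback: first note $u,v\notin D'$ is forced (otherwise $(D'\setminus\{w\})\cup\{u\}$ would dominate $G$ with fewer than $\gamma(G)$ vertices); then pick an endpoint, say $v$, having a neighbour $x\neq u$ in $G$ (one exists since $n\geq 3$, the paper's standing hypothesis, without which the theorem fails for $K_2$). In $G_e$ the vertex $x$ is dominated by some $d\in D'$ with $d\neq w$ and $d\neq u$, so either $x\in D'$ or $d\in N_G(x)$; in both cases $(D'\setminus\{w\})\cup\{v\}$ is a $\gamma$-set of $G$ containing a pair at distance at most $2$ (namely $v,x$ or $v,d$), the non-$2$-packing you need. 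A similar explicit dominator-tracking closes your $w\notin D'$ case: if $u\in D'$ and $v\notin D'$, the $G_e$-dominator of $v$ is some $d\in D'\cap N_G(v)$ with $d\neq u,w$, giving $d_G(u,d)\leq 2$.
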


We show that $K_{1,r}\odot K_1$, $r\geq 1$, are the only graphs which reach the upper bound in Corollary~\ref{cor1}.
  
\begin{thm}
  For a  $q$-critical tree $T$, $q=n(T)-1$ if and only if $T=K_{1,r}\odot K_1$ for some $r\geq 1$ (i.e. $T$ is a slightly wounded spider).
  \end{thm}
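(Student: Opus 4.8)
The backward implication is already established: the remark preceding Corollary~\ref{cor1} shows that $K_{1,r}\odot K_1$ is $(n(T)-1)$-critical for every $r\ge 1$. For the forward implication I set $n=n(T)$ and aim to prove that $\gamma(T)=n/2$, which identifies $T$ as a corona of a tree and lets Corollary~\ref{cor_tree} force that tree to be a star.

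First I would extract a single good edge from the hypothesis. As $T$ is a tree, $m(T)=n-1=q$, so $E(T)$ is the only set of $q$ edges; by the Remark a $q$-critical graph also possesses a set of $q-1=n-2$ edges whose subdivision does not change $\gamma$. In a tree such a set is $E(T)\setminus\{e\}$ for one edge $e$, so there is an edge $e$ with $\gamma(T_{E\setminus e})=\gamma(T)$.

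The heart of the argument is the inequality $\gamma(T_{E\setminus e})\ge n/2$, valid for every tree $T$ and every edge $e$, which I would prove by charging. Write $G=T_{E\setminus e}$; its vertices are the $n$ original vertices of $T$ and the $n-2$ subdivision vertices $w_f$, one per subdivided edge. Each $w_f$ has degree $2$ with both neighbours original, while an original vertex has all its neighbours among the $w_f$ except for the single unsubdivided edge $e=xy$, whose endpoints remain adjacent. Fix a dominating set $D'$ of $G$ and assign to each original vertex $v$ one element of $D'\cap N_G[v]$, which is nonempty since $D'$ dominates $G$, with every $v\in D'$ charging itself. A subdivision vertex in $D'$ can be charged only by its two original neighbours, and an original vertex $u\in D'$ only by $u$ and, when $u\in\{x,y\}$, by the other endpoint of $e$; hence each element of $D'$ receives charge at most $2$. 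Since all $n$ original vertices charge, $2|D'|\ge n$, so $\gamma(G)\ge n/2$. Applying this to our edge $e$ and combining with the classical bound $\gamma(T)\le n/2$ for trees gives $\gamma(T)=\gamma(T_{E\setminus e})=n/2$.

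It remains to read off the structure. By the classical description of the extremal graphs for $\gamma=n/2$ (see \cite{fund}), a tree with $\gamma(T)=n/2$ is a corona $T=H\odot K_1$ for some tree $H$, so $n=2n(H)$. By Corollary~\ref{cor_tree}, $T$ is $(n(H)+\alpha(H))$-critical; as the critical value of $T$ is unique and equals $n-1=2n(H)-1$, we obtain $\alpha(H)=n(H)-1$. A tree whose independence number is $n(H)-1$ has a vertex cover consisting of a single vertex, hence is a star, so $H=K_{1,r}$ and $T=K_{1,r}\odot K_1$. The decisive step is the charging bound $\gamma(T_{E\setminus e})\ge n/2$; after it, only the known corona result of Corollary~\ref{cor_tree} together with \cite{fund} is needed. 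The same saving idea underlying the charging also yields a more hands-on alternative to \cite{fund}: a strong support would force two dominators of $T_{E\setminus e}$ that could be merged into one, contradicting $\gamma(T_{E\setminus e})=\gamma(T)$, though this route must separately exclude internal non-support vertices.
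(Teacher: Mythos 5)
Your proof is correct, but it takes a genuinely different route from the paper's. The paper's forward direction is a structural case analysis: it first rules out strong supports, then shows that the set $O$ of vertices that are neither leaves nor supports is empty (two cases, each with delicate dominating-set surgery on the subdivided trees $T_{F_i}$), concluding directly that $T=T'\odot K_1$; only then does it run the arithmetic $n(T')+\alpha(T')=2n(T')-1$ from Corollary~\ref{cor_tree} to force $T'$ to be a star. You instead note that in a tree, $(n-1)$-criticality hands you a single edge $e$ with $\gamma(T_{E\setminus e})=\gamma(T)$, prove the general inequality $\gamma(T_{E\setminus e})\ge n/2$ by charging (which checks out: only original vertices charge; a subdivision vertex in $D'$ absorbs at most the two charges of its endpoints, and an original vertex in $D'$ at most two since $xy$ is the only original--original adjacency in $T_{E\setminus e}$), combine it with the classical bound $\gamma(T)\le n/2$ to get $\gamma(T)=n/2$, and then invoke the known extremal characterization of $\gamma=n/2$ trees as coronas before finishing with the same Corollary~\ref{cor_tree} computation. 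Your route is shorter and replaces the paper's intricate Case~2 subcases with a clean counting lemma, at the cost of importing the Payan--Xuong/Fink--Jacobson--Kinch--Roberts characterization from \cite{fund}, which the paper's self-contained argument avoids. Two small points worth making explicit: your step ``the critical value of $T$ is unique'' deserves a line of justification --- it follows by iterating Proposition~\ref{prop:increase}, since subdividing additional edges cannot decrease $\gamma$, so the property that every set of $q$ edges increases $\gamma$ is monotone in $q$ (the paper uses this tacitly in the same final step); and your closing sketch of a hands-on alternative to \cite{fund} is not load-bearing and, as you concede, incomplete, so it should either be developed or dropped.
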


\begin{proof}
If $T$ is a slightly wounded spider it follows from {Corollary~\ref{cor_tree}} that $T$ is $(n(T)-1)$-critical.

Now assume that $T$ is $(n(T)-1)$-critical and let $O= V(T)\setminus(\Omega(T)\cup S(T))$. To show that $T$ is a corona graph we show that $T=T' \odot K_1$ for some tree $T',$ (i.e $T$ has only weak supports and leaves and that $O=\varnothing$).

First assume that $T$ has a strong support vertex $x$ with at least two neighbours $x_1,x_2\in \Omega(T)$. Subdividing the edge $xx_i$ increases the number of support vertices and hence also the domination number of $T$, i.e. $\gamma(T_{xx_i})>\gamma(T)$. {Since} any set of $n-2$ edges must contain $xx_1$ or $xx_2${,} $\gamma(T_F)>\gamma(T)$ for any set $F$ of $n-2$ edges. 
It follows that $T$ is not $(n(T)-1)$-critical, a contradiction.

Now assume that $O\neq \varnothing$. Since $T$ is connected there are at least two edges between $S(T)$ and $O$. We consider two cases:

\emph{Case 1.}
If $|O|\leq 2$, then $S(T)$ is a $\gamma$-set of $T$ and there exist two edge-disjoint paths $P_i=(z_i,x_i,y_i)$ where $y_i\in O$ for $i=1,2$. Note that it is possible that $y_1=y_2$.

Let $F_i=\{z_ix_i,x_iy_i\}$ for $i=1,2$ and consider the graph $T_{F_i}$. Suppose that $x_iy_i$ is subdivided by $w_i$. Then $w_i$ is not dominated by a support vertex of $T_{F_i}$ and {$\gamma(T_{F_i})\geq|S(T_{F_i})|+1=|S(T)|+1>\gamma(T)$}. {Since} any set of $n-2$ edges must contain $F_1$ or $F_2$, $\gamma(T_{F})>\gamma(T)$ for any set $F$ of $n-2$ edges, a contradiction.

\emph{Case 2. }
{If $|O|>2$, there exist two non-adjacent vertices  $y_1,y_2\in O$ adjacent to two different vertices in $S(T)$, say $x_1$ and $x_2$, respectively. Let $F_i=\{z_ix_i\colon z_i\in \Omega(T)\}\cup\{y_iv\colon v\in N(y_i)\}$, $i \in \{1,2\}.$  Suppose that the edges $z_ix_i$ and $x_iy_i$ are subdivided by $f_{i,1}$ and $f_{i,2}$, respectively, and that the remaining edges incident to $y_i$ is subdivided by $f_{i,j}$ for $j=3,\dots,d_T(y_i)+1$.}

Now consider $T_{F_i}$. Let $D_{F_i}$ be  a $\gamma$-set of $T_{F_i}$ with a minimum number of subdivision vertices.  Since $\{z_i,f_{i,1}\}\cap D_{F_i}\neq \varnothing$, we consider two subcases:

\emph{Case 2.1. }Let $f_{i,1}\in D_{F_i}$. If $x_i\in D_{F_i}$, then $D=(D_{F_i}-\{f_{i,j}\colon j\geq 1\})\cup \{y_i\}$  is a dominating set of $T$ with $|D|<|D_{F_i}|$. Otherwise, from the choice of $D_{F_i}$ (it has the smallest number of subdivision vertices) $y_i\in D_{F_i}$ and $\{f_{i,j}\colon j\geq 2\}\cap D_{F_i}\neq\varnothing$. Hence, $D=(D_{F_i}-\{y_i,f_{i,1}\})\cup \{x_i\}$  is a dominating set of $T$ with $|D|<|D_{F_i}|$.

\emph{Case 2.2. }If $z_i\in D_{F_i}$, then obviously $f_{i,1}\not \in D_{F_i}$. Assume  $x_i \in D_{F_i}$.  In this case $D=(D_{F_i}-(\{z_i\}\cup \{f_{i,j}\colon j\geq 2\}))\cup \{y_i\}$  is a dominating set of $T$ with $|D|<|D_{F_i}|$. Now let $x_i\not \in D_{F_i}$. If $f_{i,2}\in D_{F_i}$, then from the choice of $D_{F_i}$ we have $(\{y\}\cup \{f_{i,j}\colon j\geq 3\})\cap D_{F_i}=\varnothing$. Thus $D=(D_{F_i}-\{f_{i,2},z_i\})\cup \{x_i\}$  is a dominating set of $T$ with $|D|<|D_{F_i}|$. Finally, if $f_{i,2}\not \in D_{F_i}$, then $y_i\in D_{F_i}$ and $\{f_{i,j}\colon j\geq 3\}\cap D_{F_i}=\varnothing$ (from the choice of $D_{F_i}$). In this case $D=(D_{F_i}-\{y_i,z_i\})\cup \{x_i\}$  is a dominating set of $T$ with $|D|<|D_{F_i}|$. 

Since $F_1\cap F_2=\varnothing$ any set of $n-2$ edges contains $F_1$ or $F_2$. Therefore $\gamma(T_{F})>\gamma(T)$ for any set $F$ of $n-2$ edges, a contradiction.

It follows that $O=\varnothing$ and hence $T=T'\odot K_1$ for a tree $T'$ and $T$ is $q$-critical for $q=n(T)-1$. By Corollary~\ref{cor_tree}, $q=n(T') +\alpha(T')$. Since $n(T)=2n(T')$ it follows that $\alpha(T')=n(T')-1=m(T')$. Thus $T'$ is a star and $T=K_{1,r}\odot K_1$ for $r\geq 1$.
\end{proof}
 
\section{$q$-critical trees with $\sd(T)=q$}

It is known \cite{vel} that for any tree $T$ the subdivision number lies between 1 and 3, therefore we characterize  $\gamma$-$2$-critical trees $T$ with $\sd(T)=2$ and $\gamma$-$3$-critical trees $T$ with $\sd(T)=3.$ 

\subsection{$2$-critical trees}

\begin{thm} \label{t2c} A tree $T$ is $2$-critical if and only if there exists at least one $\gamma$-set with exactly one pair of vertices  $x,y\in D$ such that $d(x,y)\in\{1, 2\}$ and every $\gamma$-set $D$ of $T$ satisfies the following condition: there exists at most one pair of vertices $x,y\in D$ such that $d(x,y)\leq 2$ and each of $x$ and $y$ has at least two neighbours not in $D$.

\end{thm}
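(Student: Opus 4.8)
The plan is to split the biconditional along the definition of $2$-criticality: a tree $T$ is $2$-critical exactly when (i) $T$ is \emph{not} $1$-critical, i.e.\ some single edge can be subdivided without changing $\gamma$, and (ii) subdividing \emph{every} pair of edges strictly increases $\gamma$. Part (i) I would read off Theorem~\ref{prop:1crit}: $T$ fails to be $1$-critical precisely when some $\gamma$-set is not a $2$-packing, i.e.\ some $\gamma$-set contains a pair $x,y$ with $d(x,y)\le 2$. This is the origin of the existence clause. The first subtask is then to upgrade ``some $\gamma$-set has a close pair'' to ``some $\gamma$-set has \emph{exactly one}'' by showing, under (ii), that a $\gamma$-set carrying two close pairs already yields a safe pair of edges.

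The technical engine is a description of when a subdivision is absorbed by a fixed $\gamma$-set. I would first prove the single-edge lemma: for $e=uv$ one has $\gamma(T_e)=\gamma(T)$ iff some $\gamma$-set $D$ satisfies $\{u,v\}\subseteq D$, or (say) $u\in D$, $v\notin D$ with $|N_D(v)|\ge 2$. Using Proposition~\ref{prop:increase} together with a swap argument, I would also eliminate the parasitic case in which a minimum dominating set of $T_e$ uses the new subdivision vertex, reducing every optimal dominating set of $T_e$ to a genuine $\gamma$-set of $T$ that absorbs $e$. In pair language, an absorbable edge is either an edge internal to $D$ (an adjacent pair $x,y\in D$, $d=1$) or an edge whose external endpoint retains a second dominator (a pair $x,y\in D$ with $d=2$ and common neighbour the external endpoint); this is how the close pairs and the degree condition on neighbours outside $D$ are meant to enter.

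For (ii) I would characterise when one $\gamma$-set $D$ absorbs two edges $e_1,e_2$ at once. The clean criterion is that $D$ absorbs both \emph{unless} $e_1,e_2$ are external edges cutting the \emph{same} external vertex whose only dominators in $D$ are the two cut ones. Here a structural feature of trees is decisive: any two vertices have at most one common neighbour (a tree has no $C_4$), so a distance-$2$ pair has a unique middle vertex; consequently a single close pair offers only conflicting external edges (or a lone internal edge), never two non-conflicting absorbable edges, whereas two \emph{distinct} close pairs always do. Thus, modulo the subdivision-vertex subtlety, a safe pair exists iff some $\gamma$-set carries two close pairs, and I would translate this into the universal clause of the statement and, by contraposition, close both directions.

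The hard part will be making the correspondence between ``conflict-free pair of absorbable edges'' and ``pair counted by the stated condition'' exact. I must sort out the cases in which the two close pairs are both adjacent, both at distance $2$, or mixed, verify in each that the tree structure forbids a conflict, and pin down precisely what role (or redundancy) the requirement ``each of $x$ and $y$ has at least two neighbours not in $D$'' plays once internal edges are allowed — this is where a careful accounting is essential. Equally delicate is the converse reduction: ruling out minimum dominating sets of $T_{\{e_1,e_2\}}$ that exploit the two new vertices, which I would handle by the same swap-and-reduce device as in the single-edge lemma at the cost of a short case analysis on which endpoints of $e_1,e_2$ lie in the set. I expect essentially all the content of the theorem to sit in this matching step.
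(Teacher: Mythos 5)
Your plan stands or falls with the single-edge absorption lemma, and that lemma is false as stated: $\gamma(T_e)=\gamma(T)$ is \emph{not} equivalent to the existence of a $\gamma$-set of $T$ containing both endpoints of $e$, or one endpoint together with a second dominator of the other. Take $T=P_8$ with vertices $v_1,\dots,v_8$ and $e=v_4v_5$. Then $T_e\cong P_9$ and $\gamma(T_e)=\gamma(T)=3$, but no $\gamma$-set of $P_8$ contains $\{v_4,v_5\}$, nor $\{v_4,v_6\}$, nor $\{v_3,v_5\}$ (none of these pairs extends to a dominating set of size $3$). Your swap-and-reduce device cannot repair this, because the \emph{unique} minimum dominating set of $P_9$ here is $\{v_2,w,v_7\}$: the subdivision vertex is used essentially, and exchanging $w$ for $v_4$ or $v_5$ breaks domination of the other endpoint. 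So the ``parasitic case'' cannot in general be eliminated; subdivision vertices can genuinely do work that no vertex of $T$ does. The paper's converse is organized around exactly this phenomenon: it fixes a $\gamma$-set of $T_F$ with the \emph{fewest} subdivision vertices and then, in each case where some $w_i$ survives, performs a replacement that produces a $\gamma$-set of $T$ violating the pair-count hypothesis --- not a $\gamma$-set that absorbs the subdivided edges.

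The same defect kills your two-edge criterion ``a safe pair exists iff some $\gamma$-set carries two close pairs''. Counterexample: $P_4$. No $\gamma$-set of $P_4$ contains two pairs at distance at most $2$, yet subdividing \emph{any} two edges yields $P_6$ with $\gamma(P_6)=2=\gamma(P_4)$; so every pair of edges is safe and $P_4$ is $3$-critical, while your criterion would certify it as $2$-critical. (For $F=\{v_1v_2,v_3v_4\}$ the unique minimum dominating set of $T_F$ is $\{w_1,w_2\}$, again irreducibly built from subdivision vertices.) This is also precisely where the clause you hoped might be redundant --- ``each of $x$ and $y$ has at least two neighbours not in $D$'' --- does real work: when an endpoint $x$ of the unique close pair has only one neighbour $x'$ outside $D$, the paper subdivides $xx'$ and $xy$ and replaces $x$ by the subdivision vertex of $xx'$, a dynamic absorption invisible to your static framework, and exactly the mechanism that makes $P_4$ (with $D=\{v_2,v_3\}$) non-$2$-critical. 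Any correct version of your lemma must add the case of a $\gamma$-set $D$ and an endpoint $x\in D\cap\{u,v\}$ all of whose private neighbours lie in $\{u,v\}$, so that $x$ may be traded for a subdivision vertex; and the two-edge analysis must then track such trades through all adjacency patterns of $e_1,e_2$ --- at which point you are essentially reconstructing the paper's case analysis rather than shortcutting it.
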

\begin{proof}
Suppose that there is no $\gamma$-set $D$ {in $T$} with exactly one pair of vertices  $x,y\in D$ such that $d(x,y)\in\{1, 2\}$. Then every $\gamma$-set in $T$ is a 2-packing or there are more than one pair of vertices at distance less than two in $D.$ In the first case it follows from {Theorem} \ref{prop:1crit} that $T$ is 1-critical.

{Now }suppose that $T$ has a $\gamma$-set $D$ with at least two pairs of vertices $\{x_1,y_1\}$ and $\{x_2,y_2\}$ such that $d(x_i,y_i)\leq 2$, {for $i \in \{1,2\};$} note that it is possible that $\{x_1,y_1\}\cap \{x_2,y_2\}\neq\varnothing$. On the $x_i-y_i$ geodesic, let $v_i$ be the vertex adjacent to $x_i$ (note that it is possible that $v_i=y_i$). If the edge $x_iv_i$ is subdivided with $w_i$, then $x_i$ will dominate $w_i$ and $y_i$ will dominate $v_i$. Hence there exist two edges whose subdivision does not increase the domination number of $T$ and hence $T$ is not 2-critical.

Thus there exists a $\gamma$-set $D$ with exactly one pair of vertices  $x,y\in D$ such that $d(x,y)\in\{1, 2\}$. We may assume that $D\cap \Omega(T)=\varnothing$, otherwise we may exchange a leaf with its support vertex. If this exchange results into the dominating set having two pairs of vertices  $x,y\in D$ such that $d(x,y)\in\{1, 2\}$, then we obtain the case considered in the paragraph above. Suppose 
at least one of $x$ or $y$, say $x$, has at most one neighbours not in $D$.
Since $D$ is a $\gamma$-set of $T,$ $x$ and $y$ has at least one neighbour in $V(T) \setminus D$.
Hence, $x$ has exactly one neighbour $x'\in V(T)\setminus D$. {Since $x$ is not a leaf, $xy\in E(T)$. Subdivide the edges $xx'$ and $xy$ with $w_1$ and $w_2$, respectively, to form $T'$. Then $(D-\{x\})\cup\{w_1\}$ is a dominating set of $T'$ and therefore $T$ is not 2-critical.}

Conversely, assume that every $\gamma$-set of $T$ has the desired property and to the contrary suppose  that $T$ is not 2-critical.  Hence there exists a set of two edges $F=\{e_1=x_1y_1,e_2=x_2y_2\}$ such that $\gamma(T_F)=\gamma(T)$. Let $w_1$ and $w_2$ be the subdivision vertices of $e_1$ and $e_2$, respectively.

Let $D'$ be a $\gamma$-set of $T_F$ with the smallest number of subdivision vertices and consider the following cases.

\emph{Case 1. Edges $e_1$ and $e_2$ are adjacent.} Without the loss of generality assume that $x=x_1=x_2$.
If $x\in D'$, then there is a vertex $z_i\in N[y_i]\cap D'$ for $i=1,2$. From the choice of $D'$, it follows that $w_1,w_2\not\in D'$. Therefore, $D'$ is a $\gamma$-set of $T$ such that $d(x,z_i)\leq 2$ for $i=1,2$, a contradiction.

Now consider the case where $x\not\in D'$. If $w_1,w_2\not\in D'$, then $y_1,y_2\in D'$ and therefore there exists a vertex $x'\in N(x)\setminus\{w_1,w_2\}$ such that $x'\in D'$. Therefore, $D'$ is a $\gamma$-set of $T$ such that $d(x',y_i)\leq 2$ for $i=1,2$, a contradiction.
Thus, at least one of the subdivision vertices belongs to $D'$ and by the choice of $D'$ exactly one, say $w_1$, belongs to $D'$. It is clear that $y_2\in D'$.  If $d_{T_F}(x)>2$, then there exists $x'\in N(x)\setminus\{w_1,w_2\}$. Since $x\not\in D'$ there exists $x''\in {N[x']}\cap D'$ and $D=(D'\setminus\{w_1\})\cup\{x\}$ is a $\gamma$-set of $T$ such that {$d(x'',x)\leq 2$ and $d(x,y_2)=1$}, a contradiction. Now, if $d_{T_F}(x)=2$, then $D=(D'\setminus\{w_1\})\cup\{x\}$ is a $\gamma$-set of $T$ and $y_1$ is the only neighbour of $x$ outside of $D$, a contradiction.

\emph{Case 2. Edges $e_1$ and $e_2$ are not adjacent.} We show that there exists a $\gamma$-set $D$ of $T$ such that for each edge $e_i$ there exists a pair $u_i,v_i\in D$ such that $d(u_i,v_i)\leq 2$.

If $w_1,w_2\not \in D'$, then at least one of $x_1,y_1$, say $x_1$, belongs to $D'$ and at least one of $x_2,y_2$, say $x_2$, belongs to $D'$. Then there exists $z_i\in N[y_i]\setminus \{w_i\}$ such that $z_i\in D'$ for $i=1,2$.   Therefore, $D'$ is a $\gamma$-set of $T$ such that $d(x_i,z_i)\leq 2$ for $i=1,2$, a contradiction. Thus at least one of $w_i$ belongs to $D'.$

Consider the case where $d(\{x_1,y_1\},\{x_2,y_2\})=1$, say $d(x_1,x_2)=1$. If {$w_1,w_2\in D'$}, then by the choice if $D'$, $x_1,x_2, y_1,y_2\not \in D'$. Thus $D=(D'\setminus \{w_1,w_2\})\cup \{x_1,x_2\}$ is a $\gamma$-set of $T$. If $d_T(x_1)>2$, then there exists a vertex {$x''\in D'$} such that $d(x_1,x'')\leq2${. Since $d(x_1,x_2)=1$, $D$ is a $\gamma$-set of $T$ containing two pairs of vertices at distance at most 2, a contradiction. On the other hand, if $d_T(x_1)=2$, then} $y_1$ is the only neighbour of $x_1$ outside of $D$, { also a contradiction.}

Assume now only one of $w_1$ or $w_2$ belongs to $D'$, say $w_1$. Thus by the choice of $D'$  $x_1,y_1\not \in D'$; also, $x_2\not \in D'$, otherwise $D'\setminus \{w_1\})\cup \{y_1\}$ would be a $\gamma$-set of $T_F$ contradicting our choice of $D'$.
Since $D'$ is dominating $y_2\in D'$ and there exist $x'\in N(x_2)\setminus\{x_1,w_2\}$ such that $x'\in D'$. Then $D=(D'\setminus \{w_1\})\cup \{x_1\}$is a $\gamma$-set of $T$ and $d(x_1,x')=d(x',y_2)=2$, a contradiction.

Now consider the case where $d(\{x_1,y_1\},\{x_2,y_2\})>1$ and at least one of $w_1,w_2$ is in $D'$, say $w_1$. Then $x_1,y_1\not \in D'$ and since $T$ is connected at least one of $x_1$ or $y_1$ has degree greater than 1, say $x_1$. Therefore there exists a vertex $x''$ such that $d(x_1,x'')=2$ and $x''\in D'$. Similarly, if $w_2\in D'$, there {exists} $y''\in D'$ such that $d(x_2,y'')=2$. Otherwise, if $w_2\not\in D'$ then without the loss of generality $x_2\in D'$ and there {exists} a vertex $z\in N[y_2]\setminus\{w_2\}$ such that $z\in D'$. In both cases $D=(D'\setminus\{w_i\})\cup\{x_i\}$ is a $\gamma$-set of $T$ containing two pairs of vertices at distance at most 2, a contradiction. Hence $T$ is 2-critical. 
\end{proof}

 Let $\mathcal{N}(G)$ consists of those vertices which are not contained in any $\gamma (G)$-set. Benecke and Mynhardt~\cite{myn} characterized all trees with domination subdivision number equal to~1 {as follows:}
\begin{thm}\cite{myn}\label{tt1}
For a tree T of order $n\geq 3$, $\sd(T)=1$ if and only if $T$ has 
\begin{itemize}
\item[$i)$] a leaf $u\in \mathcal{N}(T)$ or
\item[$ii)$] an edge $xy$ with $x,y\in \mathcal{N}(T)$.
\end{itemize}
\end{thm}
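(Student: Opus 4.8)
The plan is to reduce everything to a single-edge analysis: for an edge $e=xy$ subdivided by a new vertex $w$, decide exactly when $\gamma(T_e)=\gamma(T)$, and then read the conditions $(i)$--$(ii)$ off $\mathcal N(T)$. The workhorse I would record first is an \emph{extraction} observation: if $\gamma(T_e)=\gamma(T)$ and $D'$ is a $\gamma$-set of $T_e$, then $T$ has a $\gamma$-set containing $x$ or $y$. Indeed, if $w\notin D'$ then $D'$ dominates $w$, forcing $x\in D'$ or $y\in D'$, and $D'$ is then a $\gamma$-set of $T$; if $w\in D'$, replacing $w$ by $x$ gives a dominating set of $T$ of size $\le\gamma(T)$ through $x$. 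Together with Proposition~\ref{prop:increase} this controls the whole argument.

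For the implication $(i)\vee(ii)\Rightarrow\sd(T)=1$ I would show that one well-chosen subdivision strictly increases $\gamma$. For $(ii)$, if $x,y\in\mathcal N(T)$ then no $\gamma$-set of $T$ meets $\{x,y\}$, so by the extraction observation $\gamma(T_{xy})\ne\gamma(T)$, whence $\gamma(T_{xy})=\gamma(T)+1$ and $\sd(T)=1$. For $(i)$, let $u\in\mathcal N(T)$ be a leaf with support $s$; since $u$ lies in no $\gamma$-set, every $\gamma$-set must contain $s$ in order to dominate $u$. I would then prove a pendant refinement of the extraction lemma: if $\gamma(T_{us})=\gamma(T)$ then $T$ has a $\gamma$-set containing $u$. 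When $w\notin D'$ the leaf $u$ must lie in $D'$ and we are done; when $w\in D'$, minimality forces $u\notin D'$, and then $s\notin D'$ (otherwise $D'\setminus\{w\}$ would already dominate $T$ with $\gamma(T)-1$ vertices), and one checks that $(D'\setminus\{w\})\cup\{u\}$ dominates $T$ with exactly $\gamma(T)$ vertices, again a $\gamma$-set through $u$. Either outcome contradicts $u\in\mathcal N(T)$, so $\gamma(T_{us})=\gamma(T)+1$ and $\sd(T)=1$.

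For the converse I would prove the contrapositive: if $\mathcal N(T)$ contains no leaf and no edge --- equivalently, $\mathcal N(T)$ is independent and meets no leaf --- then \emph{every} edge subdivision leaves $\gamma$ unchanged, so $\sd(T)\ge2$. Fix an edge $e=xy$; it suffices to produce a dominating set of $T_e$ of size $\gamma(T)$. I would isolate three sufficient configurations: $(A)$ a $\gamma$-set containing both $x$ and $y$; $(B)$ a $\gamma$-set $D$ with, say, $x\in D$, $y\notin D$ and $y$ having a neighbour in $D$ other than $x$; and $(C)$ a set $A$ of size $\gamma(T)-1$ dominating $V(T)\setminus\{x,y\}$. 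In each case the respective set $D$, $D$, or $A\cup\{w\}$ dominates $T_e$ at no extra cost. Since $\mathcal N(T)$ is independent, $x$ and $y$ are not both in $\mathcal N(T)$, so one endpoint, say $x$, lies in some $\gamma$-set; the only obstruction to $(A)$/$(B)$ is the ``resistant'' situation in which every $\gamma$-set through $x$ dominates $y$ solely through the edge $xy$ (and symmetrically), and I would rule this out by an exchange argument that, if it failed, would manufacture either a leaf of $T$ in $\mathcal N(T)$ or a pair of adjacent vertices of $\mathcal N(T)$, against the hypothesis.

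The main obstacle is precisely this last step: converting a ``resistant edge'' into a forced feature of $\mathcal N(T)$. The cleanest route I foresee is to delete $e$, split $T$ into the subtrees $T_x\ni x$ and $T_y\ni y$, and compare $\gamma(T)$ with $\gamma(T_e)$ through the standard local quantities for each side --- the minimum size of a dominating set of $T_x$ that contains $x$, that merely dominates $x$, and that dominates $T_x-x$ --- turning $\gamma(T_e)>\gamma(T)$ into a short system of inequalities among these parameters and then decoding the extremal case structurally. A sanity check that keeps the bookkeeping honest is that a bad edge need \emph{not} itself be of type $(i)$ or $(ii)$: in $P_6$ every edge is bad, yet the edge $bc$ has neither endpoint a leaf and only $c$ in $\mathcal N(T)$, so the guaranteed witness generally lives elsewhere in the tree --- which is exactly why the converse must be argued globally rather than edge-by-edge.
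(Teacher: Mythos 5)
This statement is quoted from Benecke--Mynhardt \cite{myn}; the paper under review gives no proof of it, so your attempt can only be measured against the theorem itself. Your forward direction is complete and correct: the extraction observation (if $\gamma(T_e)=\gamma(T)$ then $T$ has a $\gamma$-set meeting $\{x,y\}$, obtained either directly when $w\notin D'$ or by swapping $w$ for $x$ when $w\in D'$) is sound, and the pendant refinement for case $(i)$ checks out in detail --- if $w\in D'$ then minimality forces $u\notin D'$, then $s\in D'$ would let $D'\setminus\{w\}$ dominate $T$ with $\gamma(T)-1$ vertices, and $(D'\setminus\{w\})\cup\{u\}$ is indeed a $\gamma$-set of $T$ through $u$. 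Together with Proposition~\ref{prop:increase}, this correctly yields $\sd(T)=1$ under $(i)$ or $(ii)$. Your reduction for the converse is also the right frame: one can check, exactly as you set it up, that $\gamma(T_e)=\gamma(T)$ if and only if one of your configurations $(A)$, $(B)$ (or its mirror), or $(C)$ holds, where $(C)$ arises from $w\in D'$ via $D'\setminus\{w\}$.

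The genuine gap is the step you yourself flag: ruling out the ``resistant edge.'' You must show that if for some edge $xy$ all of $(A)$, $(B)$, $(C)$ fail, then somewhere in $T$ there is a leaf in $\mathcal{N}(T)$ or an edge with both ends in $\mathcal{N}(T)$ --- and this implication is the entire substance of the Benecke--Mynhardt theorem, not a finishing touch. Your proposal gestures at an exchange argument and then at splitting $T$ at $e$ into $T_x$ and $T_y$ with the three standard local parameters (minimum dominating sets of $T_x$ containing $x$, dominating $x$, or ignoring $x$), but you never set up the resulting inequalities, never perform the ``structural decoding,'' and never explain how the argument propagates: as your own $P_6$ sanity check shows, the witness guaranteed by $(i)$ or $(ii)$ can lie far from the bad edge, so no purely local exchange at $xy$ can close the case, and the global (in practice inductive) mechanism that transports the failure at $xy$ to a forced feature of $\mathcal{N}(T)$ elsewhere in the tree is exactly what is missing. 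As written, the converse direction is a plan with an acknowledged hole rather than a proof, so the attempt establishes only one implication of Theorem~\ref{tt1}.
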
 

\begin{thm}
The only 2-critical tree $T$ with $\sd(T)=2$ is $T=P_{3k+2}$  for $k\geq 1$.
\end{thm}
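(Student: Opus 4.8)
The plan is to prove both directions, the substantial one being that a $2$-critical tree $T$ with $\sd(T)=2$ must be a path. For the easy direction, recall from Observation~\ref{opath} that subdividing $j$ edges of a path $P_n$ always yields $P_{n+j}$; hence for $T=P_{3k+2}$ we get $\gamma(T_e)=\gamma(P_{3k+3})=k+1=\gamma(T)$ for every single edge $e$ and $\gamma(T_F)=\gamma(P_{3k+4})=k+2>\gamma(T)$ for every pair $F$, so $T$ is $2$-critical with $\sd(T)=2$. For the converse, let $T$ be $2$-critical with $\sd(T)=2$. I will show $\Delta(T)\le 2$, so that $T$ is a path, and then Observation~\ref{opath} forces $n(T)\equiv 2\pmod 3$, i.e. $T=P_{3k+2}$ with $k\ge 1$ (the orders $\equiv 0$ or $1$ give $1$- or $3$-critical paths, and $n(T)\ge 3$ rules out $k=0$).

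The argument rests on collecting strong necessary conditions. Since $\sd(T)=2$, no single subdivision increases $\gamma$, so by Theorem~\ref{tt1} the set $\mathcal N(T)$ contains no leaf and is independent (no edge has both endpoints in $\mathcal N(T)$); in particular $T$ has no strong support, because a leaf of a strong support lies in $\mathcal N(T)$ (equivalently, subdividing the edge from a strong support to one of its leaves would increase $\gamma$). From $2$-criticality I will use two facts: the Corollary stating that $|N_D(v)|\le 2$ for every $\gamma$-set $D$ and every $v\notin D$; and the observation that \emph{every $\gamma$-set induces at most one edge}. The latter I prove by contradiction: if a $\gamma$-set $D$ contained both endpoints of two distinct edges $e_1,e_2$, then $D$ would still dominate $T_{\{e_1,e_2\}}$ (each new subdivision vertex is dominated by an endpoint lying in $D$, and no other vertex loses its dominator, since the only deleted edges join vertices of $D$), so subdividing $e_1,e_2$ would not increase $\gamma$, contradicting $2$-criticality.

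With these tools the plan is to rule out a vertex $v$ of degree at least $3$. I would choose a \emph{peripheral} branch vertex $v$, one for which all but at most one of the components of $T-v$ are paths (``legs''), fix a $\gamma$-set avoiding all leaves (hence containing every support), and analyse the lengths of the legs modulo $3$. The no-strong-support condition already forces at most one leg of length $1$, and in each residue case I expect to exhibit one of the forbidden configurations: either a leaf or an adjacent pair landing in $\mathcal N(T)$ (contradicting $\sd(T)=2$ via Theorem~\ref{tt1}), or a $\gamma$-set with two induced edges or a vertex with three dominators in some $\gamma$-set (contradicting $2$-criticality through the two facts above). The ``H-shaped'' example, in which two branch vertices each carrying two legs of length $2$ are joined by a bridge whose endpoints both fall in $\mathcal N(T)$ and hence give $\sd=1$, illustrates why the $\mathcal N(T)$-conditions are indispensable and indicates the flavour of the contradiction. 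This case analysis at the branch vertex is the main obstacle: determining exactly which vertices are forced into or out of every $\gamma$-set depends delicately on the leg lengths and on whether $v$ itself lies in a $\gamma$-set, and making the contradiction uniform across all residues is where the real work lies. Once $\Delta(T)\le 2$ is established, $T$ is a path and Observation~\ref{opath} completes the proof.
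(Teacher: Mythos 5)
There is a genuine gap: the heart of the converse --- showing that a $2$-critical tree with $\sd(T)=2$ has no vertex of degree at least $3$ --- is never actually proved. Your easy direction is fine, and your preliminary reductions are correct (no leaf and no edge inside $\mathcal N(T)$ by Theorem~\ref{tt1}, hence no strong support; the fact that a $\gamma$-set containing both endpoints of two distinct edges would survive the subdivision of those two edges, contradicting $2$-criticality). But the announced case analysis at a peripheral branch vertex --- leg lengths modulo $3$, locating a forbidden configuration in each residue class --- is only a plan, and you say yourself that ``making the contradiction uniform across all residues is where the real work lies.'' As submitted, the claim $\Delta(T)\le 2$ rests on an expectation, not an argument, so the proof is incomplete precisely at its central step.

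Moreover, the toolkit you assembled is strictly weaker than what the closing argument needs. Your two facts from $2$-criticality (no $\gamma$-set induces two edges; $|N_D(v)|\le 2$ for $v\notin D$) only detect pairs of $D$-vertices at distance $1$, or distance-$2$ pairs sharing a common middle vertex. But the relevant obstruction, established in the forward direction of Theorem~\ref{t2c}, is that \emph{no $\gamma$-set may contain two pairs of vertices at distance at most $2$}: for a pair $x,y\in D$ with $d(x,y)=2$ via a middle vertex $v$, subdividing $xv$ keeps $D$ essentially dominating ($x$ covers the subdivision vertex, $y$ covers $v$), so two such pairs kill $2$-criticality even when the middles are distinct and no vertex has three dominators. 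A configuration such as two far-apart branch vertices each carrying distance-$2$ pairs slips through your conditions entirely. The paper's proof proceeds differently and avoids any leg-length arithmetic: from Theorem~\ref{t2c} it deduces that every $\gamma$-set contains at most one leaf (and that a $\gamma$-set containing a leaf is an essentially unique $2$-packing); then, rooting at a vertex of degree at least $3$ and using that $\sd(T)=2$ forces every leaf into some $\gamma$-set (Theorem~\ref{tt1}), it splices the restrictions of these $\gamma$-sets across the subtrees $T_i$ --- the uniqueness forcing the restrictions to agree --- to manufacture a single $\gamma$-set containing two leaves, a contradiction. That splicing mechanism handles arbitrary subtrees, whereas your plan leaves the non-path component at the peripheral branch vertex unconstrained. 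To repair your proposal you would need at minimum the full distance-$2$ necessary condition of Theorem~\ref{t2c} and then either carry out the complete residue analysis or adopt an exchange argument of the paper's kind.
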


\begin{proof}
Let $T$ be a $2$-critical tree such that $\sd(T)=2$. Hence by Theorem~\ref{tt1}, $T$ has no strong support vertices. Let $D$ be a  $\gamma(T)$-set and $L=\Omega (T)\cap D$.  Then $D'=(D\setminus L)\cup N(L)$ is also a $\gamma$-set of $T$. If $|L|> 1$, then $D'$ would contain more than one pair of vertices  at distance at most 2, contradicting Theorem~\ref{t2c}. Thus, $|L|\leq 1$  for any  $\gamma(T)$-set $D$. If there exists $\gamma(T)$-set $D$ such that $|L|=1$, then $D$ is a 2-packing. Otherwise, either we could find more than one pair of vertices  at distance at most 2 in $D'$ or $x\in L$ would be at distance at most 2 from another vertex in $D$ and $x$ would have only one neighbour in $V(T)-D$, contradicting the assumption that $T$ is 2-critical. It is easy to observe that in this case if $D$ contains one leaf $x$ of $T,$ then any $\gamma$-set of $T-N[x]$ is 2-packing, so $D$ is a unique $\gamma$-set of $T$ such that $L=\{x\}$ and $D$ is 2-packing. 

Assume that there exists a vertex $v$ such that $d_T(v)\geq 3$. Root $T$ at $v$ and label the subtree rooted at $v_i$, where $v_i\in N(v)$, with  $T_i$ for $1\leq i\leq d_T(v)$.

For each $1\leq i\leq d_T(v)$, let $u_i\in \Omega(T)\cap V(T_i)$ and let $s_i\in N(u_i)$. Since $\sd(T)=2$ it follows from Theorem~\ref{tt1} that there exists a $\gamma(T)$-set $D_i$ such that $u_i\in D_i$.  Obviously, $D'_i=(D_i- \{u_i\})\cup \{s_i\}$ is a $\gamma(T)$-set and $D'_i-\{s_i\}$ is a unique 2-packing such that $S(T)- \{s_i\}\subseteq D'_i$. This implies that $D_i\cap V(T_j)=D'_i\cap V(T_j)=D'_k\cap V(T_j)=D_k\cap V(T_j)$ for every $i\neq j\neq k\in\{1,\dots, d_T(v)\}$ (for example $D'_1\cap V(T_2)=D'_3\cap V(T_2)$ and  $D'_1\cap V(T_3)=D'_2\cap V(T_3)$ and $D'_2\cap V(T_1)=D'_3\cap V(T_1)$). It follows that either $v\in D_i$ for each $1\leq i\leq d_T(v)$ or $v\not\in D_i$ for each $1\leq i\leq d_T(v)$. 

If $v\in D_i$ for each $1\leq i\leq d_T(v)$, then $D^*=(D_1- V(T_2))\cup (V(T_2)\cap D_2)$ is a $\gamma(T)$-set with more than one leaf, a contradiction. Otherwise, $v$ is dominated by $v_j$ for $1\leq j\leq d_T(v)$ and $\{v_i\colon i\neq j\}\cap D=\varnothing$. Then $D^*=(D_j- V(T_\ell))\cup (V(T_\ell)\cap D_\ell)$ where $\ell\neq j$  is a $\gamma(T)$-set with more than one leaf, a contradiction. 

It follows that $T$ is a path and from Observation~\ref{opath}, $T=P_{3k+2}$ for $k\geq 1$.
\end{proof}

\subsection{$3$-critical trees}

The following constructive characterization of the family $\mathcal{F}$ of labeled trees $T$ with $\sd (T)=3$ was given by Aram, Sheikholeslami and Favaron~\cite{fav1}. 

 Let $\mathcal{F}$ be the family of labelled trees such that $\mathcal{F}$:
 \begin{itemize}
\item contains $P_4$ where the two leaves have status $A$ and the two support vertices have status $B$; and
\item is closed under the two operations  $\mathcal{T}_1$ and  $\mathcal{T}_2$, which extend the tree $T$ by attaching a path to a vertex $v \in V (T )$.

\begin{enumerate}
\item [Operation  $\mathcal{T}_1$.] Assume $sta(v) = A.$ Then add a path $(x,y,z)$ and the edge $vx.$ Let $sta(x)=sta(y) = B$ and $sta(z) = A.$
\item [Operation  $\mathcal{T}_2$.] Assume $sta(v) =B.$ Then add a path $(x,y)$ and the edge $vx.$ Let $sta(x)=B$ and $sta(y)=A.$
\end{enumerate}
\end{itemize}

If $T\in \mathcal{F}$, we let $A(T)$ and $B(T)$ be the set of vertices of statuses $A$ and $B$, respectively, in $T$. It was shown in \cite{fav1} that $A(T)$  is a $\gamma(T)$-set and contains all leaves of $T$.

\begin{thm}\textnormal{\cite{fav1}}
For a tree T of order $n\geq 3$, 
\[
\sd(T)=3 \textrm{ if and only if } T\in \mathcal{F}.
\]
\label{tw2}
\end{thm}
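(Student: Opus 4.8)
The plan is to prove the two implications separately, both by induction, since they call for rather different machinery, and to lean on the already-established characterisation of $\sd=1$ (Theorem~\ref{tt1}) together with the fact that $\sd(T)\le 3$ always holds for trees. For the direction $T\in\mathcal{F}\Rightarrow \sd(T)=3$, I would induct on the number of applications of $\mathcal{T}_1$ and $\mathcal{T}_2$ needed to build $T$ from $P_4$. The base case is handled by Observation~\ref{opath}, which gives $\sd(P_4)=3$ (here $4\equiv 1\bmod 3$). Since $\sd(T)\le 3$ for every tree, the inductive step reduces to proving $\sd(T)\ge 3$, i.e.\ that for each $F\subseteq E(T)$ with $|F|\le 2$ there is a dominating set of $T_F$ of cardinality $\gamma(T)$. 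To make this tractable I would carry a strong invariant along the construction, namely that $A(T)$ is a $\gamma$-set, that $A(T)$ is independent and contains $\Omega(T)$, and that each vertex of $B(T)$ has exactly one neighbour in $A(T)$. Both operations attach a pendant path ($P_2$ under $\mathcal{T}_2$, $P_3$ under $\mathcal{T}_1$) whose leaf is placed in $A$, so these invariants are immediate to re-verify, and they provide enough slack that a single subdivision of an edge $ab$ can be repaired by sliding the dominating vertex along the path $a\!-\!b$; the only genuine check is that two simultaneous repairs never collide, which can happen only when both subdivided edges lie on a common short path, a configuration the invariant forbids.

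For the converse $\sd(T)=3\Rightarrow T\in\mathcal{F}$, I would induct on $n(T)$ using a longest path $P=v_0v_1\cdots v_d$ rooted at $v_0$. First I would exploit $\sd(T)\ne 1$ via Theorem~\ref{tt1}: no leaf lies in $\mathcal{N}(T)$ and $\mathcal{N}(T)$ is independent. A short argument then shows $v_{d-1}$ is a weak support of degree $2$, since a strong support would place a leaf in $\mathcal{N}(T)$ and force $\sd=1$; hence the deep end of $P$ is a pendant path. Next I would split on $\deg_T(v_{d-2})$: if $\deg_T(v_{d-2})=2$ then $(v_{d-2},v_{d-1},v_d)$ is a pendant $P_3$ hanging from $v_{d-3}$, and I prune it to recover $T$ from $T'$ by $\mathcal{T}_1$ with $\sta(v_{d-3})=A$; if $\deg_T(v_{d-2})\ge 3$ then $(v_{d-1},v_d)$ is a pendant $P_2$ hanging from a vertex of a larger tree, and I prune it to recover $T$ from $T'$ by $\mathcal{T}_2$ with $\sta(v_{d-2})=B$. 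The residual subcases (a leaf child or several pendant $P_2$'s at $v_{d-2}$) must be excluded or reduced, again using $\sd\neq 1$ and $\sd\neq 2$, and the chosen status labels must be checked to agree with the labelling that the induction hypothesis already forces on $T'$ (that is, with $A(T')$ being the distinguished $\gamma$-set of $T'$ that contains all its leaves).

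The step I expect to be the main obstacle is verifying $\sd(T')=3$ after pruning, so that the induction hypothesis actually applies. This rests on an additivity lemma of the form $\gamma(T_F)=\gamma(T'_F)+1$ for every $F\subseteq E(T')$ — the pendant $P_2$ or $P_3$ always contributes exactly one to the domination number, independently of subdivisions made elsewhere — together with a careful correspondence between the minimum dominating sets of $T$ and $T'$ and between $\mathcal{N}(T)$ and $\mathcal{N}(T')$. Granting additivity, any set of two edges realising $\sd(T')\le 2$ lifts to two edges of $T$ whose subdivision increases $\gamma(T)$, contradicting $\sd(T)=3$, and $\sd(T')=1$ is ruled out in the same way through Theorem~\ref{tt1}; this leaves $\sd(T')=3$. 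By contrast the forward direction is essentially bookkeeping once the invariant is fixed, so nearly all of the real difficulty is concentrated in this preservation argument and in the accompanying case analysis at $v_{d-2}$.
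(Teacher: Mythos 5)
You should first note a point of context: the paper does not prove this theorem at all --- it is quoted verbatim from Aram, Sheikholeslami and Favaron \cite{fav1}, so there is no in-paper proof to compare against. Your two-induction architecture (induction over the number of applications of $\mathcal{T}_1$, $\mathcal{T}_2$ for sufficiency; induction on $n$ via a longest path, pruning a pendant $P_2$ or $P_3$, for necessity) is in fact the architecture of the proof in \cite{fav1}, so the plan is sound in outline. The problem is that the one step you yourself identify as the main obstacle is not merely unproven --- the lemma you propose is false as stated, and everything downstream is conditioned on it.

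Your ``additivity lemma'' asserts that a pendant $P_2$ or $P_3$ always contributes exactly one to the domination number, independently of subdivisions elsewhere. For a pendant $P_3$ this is a standard fact and holds for any attachment vertex. For a pendant $P_2$ it fails: attaching a $P_2$ to a leaf of $P_4$ yields $P_6$, and $\gamma(P_6)=\gamma(P_4)=2$. Nor does your restriction $d_T(v_{d-2})\geq 3$ rescue the statement: attaching a $P_2$ to the centre $v$ of $P_7$ gives a tree in which $v$ has degree $3$, yet $\gamma$ is still $3$, because $\{p_2,p_6\}$ dominates everything in $P_7$ except $v$ and the new inner vertex then covers $v$ for free. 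So $\gamma(T)=\gamma(T')+1$ (and its subdivided version $\gamma(T_F)=\gamma(T'_F)+1$) is not a general fact about pendant $P_2$'s; it must be \emph{derived} from the hypothesis $\sd(T)=3$ together with the structure that the longest path forces at $v_{d-2}$, and that derivation is exactly where the content of the converse direction lives. Since your proposal explicitly says ``granting additivity,'' the converse is not yet a proof but a reduction to its hardest step.

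Two further points. First, some configurations at $v_{d-2}$ that you propose to ``exclude'' genuinely occur in trees with $\sd(T)=3$, so they must be handled, not excluded: applying $\mathcal{T}_2$ to a support vertex $b$ of the base $P_4$ produces a tree in $\mathcal{F}$ in which $v_{d-2}$ has a leaf child as well as a pendant $P_2$, and repeated applications of $\mathcal{T}_2$ at one $B$-vertex produce several pendant $P_2$'s at $v_{d-2}$. Second, in the forward direction your claim that two simultaneous repairs collide only in ``a configuration the invariant forbids'' is not correct: nothing in your stated invariant prevents both subdivided edges from being incident to the same $B$-vertex, and this case requires genuine analysis; in \cite{fav1} it is supported by additional lemmas about which vertices of an $\mathcal{F}$-tree belong to some, or to every, $\gamma$-set --- information your invariant (independence of $A(T)$, $\Omega(T)\subseteq A(T)$, one $A$-neighbour per $B$-vertex) does not by itself supply. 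You would also need such lemmas to carry out the status-matching check you correctly flag at the end of the pruning step.
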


We use this result to show that that paths of order $3k+1$, {for $k\geq 1$,} are the only 3-critical trees with $\sd(T)=3.$

\begin{thm}
The only 3-critical tree $T$ with $\sd(T)=3$ is $T=P_{3k+1}$ for $k\geq 1$.
\end{thm}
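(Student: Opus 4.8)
The plan is to prove both implications, with the forward direction immediate from \cite{vel} and Observation~\ref{opath}, and the reverse direction resting on a single uniform edge-subdivision construction. First I would reformulate the hypotheses. Since $\sd(T)=3$, no set of at most two subdivided edges can increase $\gamma$, so \emph{being $3$-critical is equivalent to the single condition that every set of three subdivided edges strictly increases $\gamma(T)$}. For the forward direction, if $T=P_{3k+1}$ then $3k+1\equiv 1\pmod 3$, so by Observation~\ref{opath} the path $P_{3k+1}$ is $3$-critical with $\sd(P_{3k+1})=3$, and nothing further is needed.

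For the converse, assume $T$ is $3$-critical with $\sd(T)=3$. By Theorem~\ref{tw2} this forces $T\in\mathcal{F}$, and I would invoke the cited property of this family that $A(T)$ is a $\gamma(T)$-set containing every leaf of $T$. I would argue by contradiction: suppose $T$ is not a path. Then $T$ has at least three leaves $u_1,u_2,u_3$; let $s_i$ be the support of $u_i$ and set $F=\{u_1s_1,u_2s_2,u_3s_3\}$. Subdividing these pendant edges with new vertices $w_1,w_2,w_3$ produces $T_F$, and the claim is that
\[
A'=\bigl(A(T)\setminus\{u_1,u_2,u_3\}\bigr)\cup\{w_1,w_2,w_3\}
\]
dominates $T_F$.

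The verification is the heart of the argument. Each leaf $u_i$ dominates in $T$ only itself and $s_i$, whereas in $T_F$ the vertex $w_i$ is adjacent to both $u_i$ and $s_i$, so $w_i$ dominates exactly $\{u_i,s_i,w_i\}$; hence the swap $u_i\mapsto w_i$ preserves domination of everything $u_i$ covered and additionally covers $w_i$. Every remaining vertex of $T$ is dominated in $A(T)$ by some vertex other than $u_1,u_2,u_3$, since a leaf dominates nothing beyond itself and its support, and that dominating vertex still lies in $A'$. Because $|A'|=|A(T)|=\gamma(T)$, this gives $\gamma(T_F)\le\gamma(T)$, and Proposition~\ref{prop:increase} upgrades this to $\gamma(T_F)=\gamma(T)$. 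Thus the three edges of $F$ can be subdivided without increasing $\gamma$, contradicting $3$-criticality. Therefore $T$ is a path, and Observation~\ref{opath} together with $\sd(T)=3$ forces $T=P_{3k+1}$ for some $k\ge 1$.

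I do not expect a genuine obstacle here; the delicate points are purely bookkeeping, and the main thing to get right is that the swap never un-dominates a vertex. One should record that distinct leaves give distinct pendant edges and hence $|F|=3$, that in $\mathcal{F}$ no support carries two leaves (each $B$-vertex has a unique $A$-neighbour, so supports are distinct $B$-vertices), and that removing $u_1,u_2,u_3$ from $A(T)$ can only fail to dominate the vertices $s_1,s_2,s_3$, which are exactly the vertices recovered by $w_1,w_2,w_3$. The contrast with a tree such as $K_{1,3}$, where no minimum dominating set contains the leaves, shows precisely why the hypothesis $T\in\mathcal{F}$ (via the property of $A(T)$) is what makes the construction succeed.
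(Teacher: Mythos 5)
Your proposal is correct and essentially reproduces the paper's argument: both rely on Theorem~\ref{tw2} and the fact that $A(T)$ is a $\gamma(T)$-set containing all leaves, then subdivide pendant edges and swap each leaf $u_i$ for its subdivision vertex $w_i$ to show $\gamma$ does not increase, forcing $T$ to be a path and then $T=P_{3k+1}$ via Observation~\ref{opath}. The only (inessential) difference is that the paper subdivides \emph{all} $|\Omega(T)|$ pendant edges at once to conclude $q>|\Omega(T)|$, whereas you subdivide exactly three of them when $T$ is not a path.
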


\begin{proof}
Let $T$ be a tree with $\sd(T)=3$. By Theorem~\ref{tw2}, $T\in \mathcal{F}$ and there exist a $\gamma(T)$-set $D$ containing all the leaves. Let $\Omega(T)=\{v_1,\dots,v_{\ell}\}$. Since $T\in \mathcal{F}$ each $v_i$ has exactly one neighbour, say $u_i$. Now, let $F=\{u_iv_i\colon i=1,\dots,\ell\}$ {and} consider the graph $T_F$ where the subdivision vertices are denoted by $w_i$, respectively. Then $(D \textcolor{red}{\setminus} \Omega(T))\cup\{w_i\colon i=1,\dots,\ell\}$ is a $\gamma$-set of $T_F$. It {therefore} follows that if $T$ is $q$-critical, then $q> |\Omega(T)|$. Hence if $T$ is 3-critical, $|\Omega(T)|=2$ and $T$ is a path. From the Observation~\ref{opath} $T=P_{3k+1}$ for $k\geq1$.
\end{proof}


\begin{thebibliography}{99}

\bibitem{fav1}H. Aram, S.M. Sheikholeslami, O. Favaron, Domination subdivision number of trees, \emph{Discrete Math.} \textbf{309} (2009), 622--628.

\bibitem{myn}S. Benecke, C. M. Mynhardt, Trees with domination subdivision number one, \emph{Australasian J. Combin.} \textbf{42} (2008), 201--209.

\bibitem{bhv} A. Bhattacharya, G.R. Vijayakumar, Effect of edge--subdivision on vertex--domination in a graph, \emph{Discuss. Math. Graph Theory} \textbf{22} (2002),  335--347.

\bibitem{fav3} O. Favaron, T.W. Haynes, S.T. Hedetniemi, Domination subdivision numbers in graphs, \emph{Utilitas Mathematica} \textbf{66} (2004), 195--209.

\bibitem{fav2} O. Favaron, H. Karami, S.M. Sheikholeslami, Disproof of a conjecture on the subdivision domination number of a graph, \emph{Graphs and Combinatorics} \textbf{24} (2008), 309--312.

\bibitem{HHH}
T.W. Haynes, S.M. Hedetniemi, S.T. Hedetniemi, Domination and independence subdivision numbers of graphs, {\it Discussiones Math. Graph Theory} {\bf 20}  (2000), 271--280.

\bibitem{fund}T. W. Haynes, S. T. Hedetniemi and P. J. Slater, Fundamentals of Domination in Graphs, Marcel Dekker Inc., New York (1998).

\bibitem {rad13}N.~J.~Rad, Domination critical graphs upon edge subdivision, \emph{Journal of Combinatorial Mathematics and Combinatorial Computing} \textbf{87} (2013), 101--113.

\bibitem{vel}S. Velammal, Studies in graph theory: covering, independence, domination and related topics, Ph.D. Thesis, Manonmaniam Sundaranar University,
Tirunelveli, 1997.
\end{thebibliography}
\end{document}